\documentclass{amsart}
\usepackage{amsmath}
\usepackage{amssymb}
\usepackage{amsfonts}

\newtheorem{theorem}{Theorem}
\theoremstyle{plain}

\newtheorem{corollary}{Corollary}

\newtheorem{example}{Example}

\newtheorem{lemma}{Lemma}

\newtheorem{remark}{Remark}

\numberwithin{equation}{section}
\input{tcilatex}

\begin{document}
\title[Weak and Strong Convergence]{Weak and Strong Convergence of Implicit
	Iterations for Lipschitzian Hemi-Contractive and $\alpha $--Hemi-Contractive
	Semigroups in Banach Spaces }
\author{FER\.{I}T G\"{U}RB\"{U}Z}
\address{Department of Mathematics, K\i rklareli University, K\i rklareli
39100, T\"{u}rkiye }
\email{feritgurbuz@klu.edu.tr}
\urladdr{}
\thanks{}
\curraddr{ }
\urladdr{}
\thanks{}
\date{}
\subjclass{Primary 47H09, 47H20; Secondary 47H10, 47J25, 65J15.}
\keywords{Fixed point theory, implicit iterative schemes, hemi-contractive
	mappings, $\alpha $--hemi-contractive semigroups, Opial's condition, weak
	and strong convergence.}
\dedicatory{}
\thanks{}

\begin{abstract}
We investigate an implicit iterative scheme for approximating common fixed
points of one-parameter semigroups generated by Lipschitzian
hemi-contractive and $\alpha $--hemi-contractive mappings on closed convex
subsets of Banach spaces. Under suitable conditions on the control
sequences, we establish weak convergence of the proposed iteration in
uniformly convex Banach spaces satisfying Opial's condition. Furthermore,
strong convergence results are obtained in general Banach spaces without
imposing uniform convexity assumptions. The analysis is based on a careful
use of recursive inequality techniques and weak convergence principles,
allowing us to extend several known results in the literature. In
particular, the proposed framework provides a unified approach that
encompasses important classes of nonlinear operators, including
pseudocontractive and demicontractive mappings, as special cases of
hemi-contractive semigroups. Finally, we present illustrative applications
to nonlinear evolution equations, variational inequality problems, and
nonlinear integral equations, demonstrating the broad applicability of the
obtained results.
\end{abstract}

\maketitle

\section{Introduction}

Many problems arising in nonlinear analysis can be formulated as fixed point
problems for nonlinear operators acting on Banach spaces. In particular, the
approximation of fixed points by iterative procedures plays a central role
due to its strong connections with numerical algorithms, optimization
methods, and equilibrium models. Given a mapping $T$ defined on a subset of
a Banach space $E$, constructing sequences that converge to elements of the
fixed point set $F(T)$ remains a fundamental problem in the theory (see \cite%
{Agarwal, Chidume}).

Classical iterative schemes, such as the Mann and Ishikawa iterations, were
originally introduced for contractive and nonexpansive mappings and were
later extended to broader classes of nonlinear operators (see \cite{Mann,
	Tan}). These developments revealed that convergence behavior depends not
only on the contractive nature of the operator but also on the geometry of
the underlying space and the structure of the iteration process. In this
context, implicit iterative methods have attracted considerable attention
due to their improved stability properties, particularly in situations where
nonexpansiveness is no longer available (see \cite{Okeke, Suzuki, Yildirim}).

In addition to single-operator frameworks, fixed point approximation for
families of mappings---especially one-parameter semigroups---has become an
important area of research. Strongly continuous semigroups arise naturally
in the study of nonlinear evolution equations and time-dependent dynamical
systems, where the approximation of common fixed points corresponds to
determining equilibrium or steady-state solutions (see \cite{Bruck, Thong}).

Most existing results on iterative schemes for nonlinear semigroups are
developed under nonexpansive or pseudocontractive assumptions and often
require strong geometric conditions such as uniform convexity or smoothness
of the underlying space (see \cite{Kim, Osilike 1}). However, these
assumptions exclude a wide range of nonlinear operators encountered in
applications. In contrast, hemi-contractive and $\alpha $--hemi-contractive
mappings form a substantially broader class, encompassing pseudocontractive
and demicontractive mappings as special cases (see \cite{Osilike 2}).
Despite their generality, implicit iterative methods for semigroups of such
mappings have not been sufficiently developed in the existing literature.

Motivated by this gap, the present paper investigates an implicit Mann-type
iteration scheme for strongly continuous semigroups of Lipschitzian
hemi-contractive and $\alpha $--hemi-contractive mappings. We establish weak
convergence results in uniformly convex Banach spaces satisfying Opial's
condition and derive strong convergence results in general Banach spaces
without imposing uniform convexity. The analysis is carried out under mild
control conditions and relies on recursive inequality techniques combined
with weak convergence principles.

The main contribution of this work lies in providing a unified and
systematic treatment of implicit iterative schemes for semigroups beyond the
classical nonexpansive framework. In particular, our results extend several
known convergence theorems and apply to a wider class of nonlinear problems.
Furthermore, our approach relaxes the global summability parameters often
required in numerical tracking.

Finally, the applicability of the theoretical results is illustrated through
structural examples involving nonlinear evolution equations, variational
inequality problems, and nonlinear integral equations applicable to convex
programming scenarios (see \cite{Bin Dehaish, Islam, Takahashi}).

The remainder of the paper is organized as follows: Section 2 presents the
necessary preliminaries, geometric properties of Banach spaces, and
essential auxiliary definitions. The main convergence theorems (including
weak and strong topological formulations) are formally stated in Section 3.
Section 4 is dedicated to the rigorous mathematical validation and complete
proofs of the main results. Further final remarks and explicit concrete
applications are discussed in the closing section.

\section{Preliminaries and Auxiliary Results}

Throughout the sequel, $E$ denotes a real Banach space and $E^{\ast }$ its
dual space. All mappings under consideration act on a fixed nonempty closed
and convex subset $C\subset E$. The duality pairing between $E$ and $E^{\ast
}$ is denoted by $\left \langle \cdot ,\cdot \right \rangle $. For a
sequence $\left \{ x_{n}\right \} \subset E$, the notations $%
x_{n}\rightarrow x$ and $x_{n}\rightharpoonup x$ stand for strong and weak
convergence, respectively.

\subsection{Duality Mapping and Geometric Properties}

The normalized duality mapping $J:E\rightarrow 2^{E^{\ast }}$ is defined by 
\begin{equation*}
	J\left( x\right) =\left \{ f\in E^{\ast }:\left \langle x,f\right \rangle
	=\left \Vert x\right \Vert ^{2},\text{ }\left \Vert f\right \Vert =\left
	\Vert x\right \Vert \right \} ,\qquad \text{ }x\in E.
\end{equation*}

The duality mapping plays a central role in the study of nonlinear operators
in Banach spaces, particularly in the structural analysis of
pseudocontractive and related generalized contractive mappings (see \cite%
{Chidume, Petryshyn}).

A Banach space $E$ is said to satisfy Opial's condition if for every
sequence $\left \{ x_{n}\right \} \subset E$ converging weakly to $x\in E$,
one has

\begin{equation*}
	\liminf \limits_{n\rightarrow \infty }\left \Vert x_{n}-x\right \Vert
	<\liminf \limits_{n\rightarrow \infty }\left \Vert x_{n}-y\right \Vert
\end{equation*}%
for all $y\in E$ with $y\neq x$ (see \cite{Opial}). It is worth noting that
while Hilbert spaces and the classical sequence spaces $l_{p}$ $\left( 1\leq
p<\infty \right) $ satisfy Opial's condition, general uniformly convex
Banach spaces (such as $L_{p}$ spaces for $p\neq 2$) do not automatically
satisfy it, making the explicit assumption of Opial's framework essential
for weak convergence configurations (see \cite{Chidume}).

\subsection{Hemi-Contractive and $\protect \alpha $--Hemi-Contractive Mappings%
}

Let $T:C\rightarrow C$ be a mapping with nonempty fixed point set $F(T)$.
The mapping $T$ is called hemi-contractive if there exists $j\left(
y-p\right) \in J\left( y-p\right) $ such that 
\begin{equation*}
	\left \langle Tx-p,j\left( x-p\right) \right \rangle \leq \left \Vert
	x-p\right \Vert ^{2},\qquad \forall x\in C,\text{ }p\in F(T).
\end{equation*}%
More generally, $T$ is called $\alpha $--hemi-contractive, where $0\leq
\alpha <1$, if

\begin{equation*}
	\left \langle Tx-p,j\left( x-p\right) \right \rangle \leq \alpha \left \Vert
	x-p\right \Vert ^{2},\qquad \forall x\in C,\text{ }p\in F(T).
\end{equation*}%
These classes properly include pseudocontractive and demicontractive
mappings and have been extensively studied in the context of nonlinear
iterative routines (see \cite{Osilike 1, Osilike 2}).

\subsection{Strongly Continuous Semigroups}

A family of mappings $\mathcal{T}=\left \{ T\left( t\right) :t\geq
0\right
\} $ from $C$ into itself is called a strongly continuous semigroup
if:

$1.$ $T\left( 0\right) x=x$ for all $x\in C;$

$2.$ $T\left( t+s\right) =T\left( t\right) \circ T\left( s\right) $ for all $%
t,s\geq 0;$

$3.$ for each $x\in C,$ 
\begin{equation*}
	\lim \limits_{t\rightarrow 0^{+}}\left \Vert T\left( t\right) x-x\right
	\Vert =0.
\end{equation*}%
Such semigroups arise naturally in the study of nonlinear evolution
equations, variational stability problems, and time-dependent dynamical
systems (see \cite{Bruck, Thong}).

\subsection{Auxiliary Lemmas}

We conclude this section with two auxiliary results.

\begin{lemma}
	\label{Lemma1} Let $\left \{ a_{n}\right \} $ be a sequence of nonnegative
	real numbers satisfying%
	\begin{equation*}
		a_{n+1}\leq \left( 1-\alpha _{n}\right) a_{n}+\beta _{n},\qquad n\geq 1,
	\end{equation*}%
	where $\left \{ \alpha _{n}\right \} \subset \left( 0,1\right) $ with $\sum
	\limits_{n=1}^{\infty }\alpha _{n}=\infty $, and $\left \{ \beta
	_{n}\right
	\} $ is a sequence of nonnegative real numbers such that $\sum
	\limits_{n=1}^{\infty }\beta _{n}<\infty $. Then%
	\begin{equation*}
		\lim \limits_{n\rightarrow \infty }a_{n}=0.
	\end{equation*}
\end{lemma}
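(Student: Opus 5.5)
The plan is the standard two-stage argument: first show that $\{a_n\}$ converges, then show its limit must be zero by exploiting the divergence of $\sum \alpha_n$.

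\emph{Step 1 (convergence).} Since $\alpha_n\in(0,1)$ and $a_n\ge 0$, the recursion gives $a_{n+1}\le a_n+\beta_n$. Set
\begin{equation*}
	b_n=a_n+\sum_{k=n}^{\infty}\beta_k ,
\end{equation*}
which is finite because $\sum\beta_k<\infty$. Then $b_{n+1}=a_{n+1}+\sum_{k\ge n+1}\beta_k\le a_n+\beta_n+\sum_{k\ge n+1}\beta_k=b_n$. Thus $\{b_n\}$ is nonincreasing and bounded below by $0$, so it converges. Since the tail sums $\sum_{k\ge n}\beta_k$ tend to $0$, $a_n$ converges to some limit $L\ge 0$. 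In particular $\{a_n\}$ is bounded.

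\emph{Step 2 (the limit is zero).} Suppose $L>0$. Then there is $N$ with $a_n\ge L/2$ for all $n\ge N$. Rewriting the recursion as $\alpha_n a_n\le a_n-a_{n+1}+\beta_n$ and summing from $N$ to $M$ gives
\begin{equation*}
	\frac{L}{2}\sum_{n=N}^{M}\alpha_n\le a_N-a_{M+1}+\sum_{n=N}^{M}\beta_n\le a_N+\sum_{n=1}^{\infty}\beta_n .
\end{equation*}
The right-hand side is finite and independent of $M$, so letting $M\to\infty$ contradicts $\sum\alpha_n=\infty$. Hence $L=0$, that is, $\lim_{n\to\infty}a_n=0$.

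The only step needing care is Step 1: establishing that the limit exists rather than just bounding $\limsup a_n$. The monotone auxiliary sequence $b_n$ handles this cleanly.

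As an alternative, one could iterate the recursion directly, using $\prod_{k=m}^{n}(1-\alpha_k)\le \exp\bigl(-\sum_{k=m}^{n}\alpha_k\bigr)\to 0$. Splitting the $\beta$-contributions at an index $m$ chosen so that $\sum_{k\ge m}\beta_k<\varepsilon$ then yields $\limsup a_n\le\varepsilon$. I would present the first argument since it is shorter.
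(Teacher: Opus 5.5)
Your proof is correct and complete. The quantity $b_n=a_n+\sum_{k\ge n}\beta_k$ is nonincreasing because $a_{n+1}\le a_n+\beta_n$, so $a_n$ converges. The telescoped bound $\frac{L}{2}\sum_{n=N}^{M}\alpha_n\le a_N+\sum_{n\ge1}\beta_n$ then rules out a positive limit.

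The paper does not prove this lemma at all; it only points to the literature. Your argument is therefore a self-contained replacement rather than an alternative to an existing proof.

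Between your two options, the monotone-quantity argument needs only $\alpha_n\ge0$ (the bound $\alpha_n<1$ is never used), but it depends essentially on $\sum\beta_n<\infty$. The product estimate $\prod_{k=m}^{n}(1-\alpha_k)\le\exp\bigl(-\sum_{k=m}^{n}\alpha_k\bigr)$ needs $0\le\alpha_k\le1$ to iterate the recursion, but it bounds $\limsup a_n$ directly. It is also the one that extends to the more common form $a_{n+1}\le(1-\alpha_n)a_n+\alpha_n\delta_n+\beta_n$ with $\limsup\delta_n\le0$. There $\sum\alpha_n\delta_n$ may diverge, and no monotone quantity like $b_n$ is available.

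Your Step 1 is exactly what survives when $\alpha_n\equiv0$. That is the situation in which the paper actually invokes the lemma in the proof of Theorem~\ref{Theorem 2}, and there only the existence of $\lim a_n$ follows, not that it is zero.
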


This is a standard convergence tracking lemma widely adopted for strong
contraction routines (see \cite{Suzuki}).

\begin{lemma}
	\label{Lemma2} Let $E$ be a Banach space satisfying Opial's condition, and
	let $\left \{ x_{n}\right \} $ be a bounded sequence in $E$. Suppose that
	every weak cluster point of $\left \{ x_{n}\right \} $ belongs to a closed
	set $D\subset E$. Then $\left \{ x_{n}\right \} $ converges weakly to a
	point in $D $.
\end{lemma}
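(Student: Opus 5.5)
The plan is the classical Opial argument: show that $\{x_{n}\}$ has at least one weak subsequential limit, and that any two such limits coincide.

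First, a caveat about the statement. As literally written, Lemma \ref{Lemma2} cannot hold. Take $p\neq q$ in $D$ and set $x_{2n}=p$, $x_{2n+1}=q$. This sequence is bounded, its only weak cluster points are $p$ and $q$ (both in $D$), and it does not converge weakly. The hypothesis that is always used alongside this lemma, and which I will add, is that
\begin{equation*}
\lim_{n\rightarrow \infty }\left\Vert x_{n}-p\right\Vert \text{ exists for every }p\in D .
\end{equation*}
In the later applications this follows from the recursive inequalities, for instance from $\Vert x_{n+1}-p\Vert \leq \Vert x_{n}-p\Vert +\beta_{n}$ with $\sum \beta_{n}<\infty$. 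I will also work in a reflexive setting, as in the uniformly convex spaces of the main theorems, so that bounded sequences have weakly convergent subsequences. I will read ``weak cluster point'' as ``weak limit of a subsequence''.

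\emph{Step 1 (existence).} Since $\{x_{n}\}$ is bounded and $E$ is reflexive, some subsequence $x_{n_{i}}\rightharpoonup p$ (Eberlein--\v{S}mulian). By hypothesis $p\in D$, so the set of weak subsequential limits is nonempty and contained in $D$.

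\emph{Step 2 (uniqueness, the key step).} Suppose $x_{n_{i}}\rightharpoonup p$ and $x_{m_{j}}\rightharpoonup q$ with $p\neq q$; both lie in $D$. Put $d(y)=\lim_{n}\Vert x_{n}-y\Vert$, which exists for $y\in D$. Along a subsequence, the limit inferior equals the full limit. Opial's condition applied to $\{x_{n_{i}}\}$ gives
\begin{equation*}
d(p)=\liminf_{i}\Vert x_{n_{i}}-p\Vert <\liminf_{i}\Vert x_{n_{i}}-q\Vert =d(q).
\end{equation*}
Applied to $\{x_{m_{j}}\}$ with the roles of $p$ and $q$ exchanged, it gives $d(q)<d(p)$. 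These two inequalities contradict each other, so $p=q$.

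\emph{Step 3 (conclusion).} If $x_{n}\not\rightharpoonup p$, then there exist $f\in E^{\ast }$, $\varepsilon >0$ and a subsequence with $|\langle x_{n_{k}}-p,f\rangle |\geq \varepsilon$. This subsequence is bounded, so by Step 1 it has a further weakly convergent subsequence. Its limit lies in $D$, hence equals $p$ by Step 2. That contradicts the choice of $f$ and $\varepsilon$, so $x_{n}\rightharpoonup p\in D$. The closedness of $D$ is not actually needed once cluster points are assumed to lie in $D$; it only matters in applications where $D$ is a fixed point set.

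The only delicate point is Step 2, and the obstacle there is precisely the existence of $\lim_{n}\Vert x_{n}-p\Vert$ for $p\in D$. Without it, the two Opial inequalities compare limits inferior along different subsequences and yield no contradiction. For this reason I would state the lemma with that hypothesis explicitly, and verify the hypothesis separately in each convergence theorem where the lemma is invoked.
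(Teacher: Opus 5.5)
Your counterexample is correct: Lemma~\ref{Lemma2} as stated is false, and the paper gives no proof of it beyond a citation to Chidume and Opial.

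The hypothesis you add is the existence of $\lim_{n}\Vert x_{n}-p\Vert$ for every $p\in D$. This is exactly what the paper tacitly relies on whenever it invokes the lemma; it is derived in Step~1 of the proof of Theorem~\ref{Theorem 1}. Your Step~2 is the same Opial chain the paper writes out in Step~4 of the proof of Theorem~\ref{Theorem 3}. With these additions, your argument is the standard one and is correct.

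One sharpening: reflexivity is needed, not merely convenient. The space $\ell^{1}$ satisfies Opial's condition, since by the Schur property weakly convergent sequences there converge in norm. Yet the unit vectors $e_{n}$ are bounded, have no weak cluster point, and do not converge weakly. So in $\ell^{1}$ the hypothesis on cluster points holds vacuously for every $D$ while the conclusion fails.
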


This foundational result follows directly from the unique metric layout of
Opial's condition (see \cite{Chidume, Opial}).

\begin{lemma}
	\label{Lemma3}\textbf{(Demiclosedness Principle)} Let $C$ be a nonempty
	closed convex subset of a uniformly convex Banach space $E$, and let $%
	T:C\rightarrow C$ be a Lipschitzian $\alpha $--hemi-contractive (or
	hemi-contractive) mapping with a nonempty fixed point set $F\left( \mathcal{T%
	}\right) $. Assume that $\left \{ x_{n}\right \} \subset C$ is sequence
	satisfying $x_{n}\rightarrow x$ (weakly) and 
	\begin{equation*}
		\lim \limits_{n\rightarrow \infty }\left \Vert x_{n}-T\left( t\right)
		x_{n}\right \Vert =0
	\end{equation*}%
	for $t>0$. Then $x\in F\left( \mathcal{T}\right) $ (i.e.,$\left( I-T\left(
	t\right) \right) x=0$).
\end{lemma}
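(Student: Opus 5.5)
The plan is to reduce the statement to Browder's classical demiclosedness principle for nonexpansive mappings in uniformly convex Banach spaces. Fix $t>0$ and write $T=T(t)$ and $L\geq 1$ for its Lipschitz constant. The reduction goes through an auxiliary mapping built from $T$ that has two properties. First, it is nonexpansive, or at least satisfies a Browder-type inequality, on the bounded set containing $\{x_n\}$ and $x$. Second, it has the same fixed points as $T$, and its approximate fixed points include those of $T$.

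\textbf{Step 1: boundedness.} Weak convergence $x_n\rightharpoonup x$ already makes $\{x_n\}$ bounded. Fix $p\in F(\mathcal{T})$ and choose $R$ so that $\{x_n\}\cup\{x\}\subset B(p,R)$. The $\alpha$--hemi-contractive inequality
\begin{equation*}
\langle Ty-p,j(y-p)\rangle\leq\alpha\Vert y-p\Vert^{2}
\end{equation*}
(with $\alpha=1$ in the hemi-contractive case) together with $\Vert Ty-p\Vert\leq L\Vert y-p\Vert$ keeps $T$ of this ball inside $B(p,LR)$. All later estimates take place on the bounded convex set $K=\overline{\mathrm{co}}\,B(p,LR)\cap C$.

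\textbf{Step 2: the surrogate.} For small $\lambda\in(0,1)$, set
\begin{equation*}
S_\lambda=(1-\lambda)I+\lambda T\bigl((1-\lambda)I+\lambda T\bigr),
\end{equation*}
which is the Ishikawa-type composite. Then $F(S_\lambda)=F(T)$: one inclusion is trivial, and the other follows by expanding $S_\lambda y=y$ and pairing with $j(y-p)$ in the hemi-contractive inequality. Next, $\Vert x_n-Tx_n\Vert\to0$ and the Lipschitz bound give
\begin{equation*}
\Vert x_n-S_\lambda x_n\Vert\leq\lambda(1+L+\lambda L)\Vert x_n-Tx_n\Vert\to0 .
\end{equation*}
So $\{x_n\}$ is an approximate fixed point sequence for $S_\lambda$.

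\textbf{Step 3: asymptotic-center argument for $S_\lambda$.} Let $\varphi(y)=\limsup_n\Vert x_n-y\Vert$. Uniform convexity makes $\varphi$ uniformly convex in the sense of Zălinescu: its modulus controls $\varphi(\tfrac{u+v}{2})$ from below by $\varphi(u),\varphi(v)$ minus a positive function of $\Vert u-v\Vert$. I will show that the minimizer of $\varphi$ over $K$ is a fixed point of $S_\lambda$, using
\begin{equation*}
\varphi(S_\lambda y)\leq\limsup_n\Vert S_\lambda x_n-S_\lambda y\Vert\leq\kappa(\lambda)\,\varphi(y).
\end{equation*}
I then need to identify this minimizer with $x$. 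For that I would use the weak lower semicontinuity of $\Vert\cdot\Vert$, applied along the sequence $\frac{1}{m}\sum_{k\leq m}x_{n_k}$ of Banach--Saks type averages that converges strongly to $x$ in a uniformly convex space (Kakutani's theorem on a subsequence). This transfers the relation $\Vert z-S_\lambda z\Vert=0$ to $z=x$. It then gives $S_\lambda x=x$, hence $Tx=x$ by Step 2.

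\textbf{Main obstacle.} The hardest step is the displayed Lipschitz-type estimate for $S_\lambda$ with $\kappa(\lambda)\leq1$, or at least $\kappa(\lambda)\to1$ fast enough that the modulus of convexity absorbs the excess. Hemi-contractivity controls $T$ only against points of $F(\mathcal{T})$, not between arbitrary pairs $y,x_n$. I would first try to expand
\begin{equation*}
\Vert S_\lambda x_n-S_\lambda y\Vert^{2}
\end{equation*}
by the subdifferential inequality
\begin{equation*}
\Vert u+v\Vert^{2}\leq\Vert u\Vert^{2}+2\langle v,j(u+v)\rangle ,
\end{equation*}
and bound every occurrence of $\langle Tu-Tv,j(\cdot)\rangle$ through the triangle inequality via the fixed point $p$. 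This costs terms of order $\lambda^{2}L^{2}$ and $\lambda\Vert x_n-Tx_n\Vert$; the latter vanishes in the limit. If the cross terms cannot be closed this way, then the argument genuinely requires the global pseudocontractive inequality along the pair $(x_n,x)$. In that case I would state explicitly that the lemma is being used only in that setting.
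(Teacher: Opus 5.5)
\textbf{Verdict: the gap is real, and it cannot be closed for hemi-contractive maps.} The gap is the step you flag yourself: the estimate $\varphi(S_\lambda y)\le\kappa(\lambda)\varphi(y)$ with $\kappa(\lambda)\le1$ fails, because the statement is false in the hemi-contractive case. For the asymptotic-center step you need two-point control of $T$ on the pair $(x_n,y)$. Hemi-contractivity only compares a point with elements of $F(\mathcal{T})$, so routing through $p$ gives bounds in $\Vert x_n-p\Vert$ and $\Vert y-p\Vert$ rather than $\Vert x_n-y\Vert$. The Lipschitz bound alone gives only $\kappa(\lambda)=1+\lambda(L-1)(1+\lambda L)$, which exceeds $1$ whenever $L>1$. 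Here is a counterexample in $E=\ell^2$, which is uniformly convex and satisfies Opial's condition. Let $(e_k)_{k\ge0}$ be the unit vector basis, $C=\{y:\Vert y\Vert\le2\}$, $u_n=e_0+e_n$ for $n\ge1$, and
\begin{equation*}
c(y)=\min\Bigl\{1,\ \inf_{n\ge1}\bigl(\Vert y-u_n\Vert+\tfrac{1}{n}\bigr)\Bigr\},\qquad Ty=\bigl(1-c(y)\bigr)y .
\end{equation*}
The function $c$ is $1$-Lipschitz, and $c>0$ everywhere because the $u_n$ are $\sqrt2$-separated, so $F(T)=\{0\}$. Moreover $T$ maps $C$ into $C$, is $3$-Lipschitz there, and satisfies $\langle Ty-0,y-0\rangle=(1-c(y))\Vert y\Vert^2\le\Vert y\Vert^2$. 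We have $\Vert u_n-Tu_n\Vert\le\sqrt2/n\to0$ and $u_n\rightharpoonup e_0$, yet $c(e_0)=1$ gives $Te_0=0\ne e_0$. The semigroup reading fails too. The flow of $\dot u=-c(u)u$ is a strongly continuous semigroup of Lipschitz hemi-contractive maps on $C$ with $F(\mathcal{T})=\{0\}$. For every $t>0$ it satisfies $\Vert u_n-T(t)u_n\Vert\le(e^{\sqrt2 t}-1)/n$, while $T(t)e_0=e^{-t}e_0$. The paper's own proof has the same hole. Nothing justifies passing from $\langle x_n-Tx_n,j(x_n-p)\rangle\ge0$ to $\langle x-Tx,j(x-p)\rangle\ge0$. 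Even granted, the latter is just the hemi-contractive inequality at the point $x$, which holds at every point of $C$, so it cannot force $Tx=x$.

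\textbf{What survives, and what needs repair.} The $\alpha$--hemi-contractive case with $\alpha<1$ is true, and it needs neither a surrogate nor uniform convexity. Pairing $x_n-p=(x_n-Tx_n)+(Tx_n-p)$ with $j(x_n-p)$ gives $(1-\alpha)\Vert x_n-p\Vert\le\Vert x_n-Tx_n\Vert\to0$, so $x_n\to p$ in norm and $x=p\in F(\mathcal{T})$. Your fallback to a global pseudocontractive inequality is the right instinct, but then use the resolvent $R=(2I-T)^{-1}$ as the surrogate. It is nonexpansive because $I-T$ is accretive, it has $F(R)=F(T)$, and it satisfies $\Vert x_n-Rx_n\Vert\le\Vert x_n-Tx_n\Vert$, so Browder's principle finishes. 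For the Ishikawa composite you would still have to prove nonexpansiveness. Two smaller repairs are also needed.
\begin{itemize}
\item $F(S_\lambda)=F(T)$ comes from the Lipschitz bound, not from the hemi-contractive pairing. If $S_\lambda y=y$, then $Tz=y$ with $z-y=\lambda(Ty-y)$, so $\Vert y-Ty\Vert\le\lambda L\Vert y-Ty\Vert$, forcing $Ty=y$ once $\lambda L<1$.
\item In Step 3, without Opial's condition the asymptotic center of $(x_n)$ need not be its weak limit. Banach--Saks averages with weak lower semicontinuity do not transfer $S_\lambda z=z$ from the center to $x$. Once you have a nonexpansive surrogate, cite Browder's demiclosedness principle directly.
\end{itemize}
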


\begin{proof}
	By definition, since $T$ is a hemi-contractive mapping, for any $p\in
	F\left( \mathcal{T}\right) $ and all $y\in C$, there exists a normalized
	duality mapping $j\left( y-p\right) \in J\left( y-p\right) $ such that%
	\begin{equation*}
		\left \langle Ty-p,j\left( y-p\right) \right \rangle \leq \left \Vert
		y-p\right \Vert ^{2}.
	\end{equation*}%
	Since the fixed point set $F\left( \mathcal{T}\right) $ by hypothesis, we
	choose an arbitrary reference point $p\in F\left( \mathcal{T}\right) $.
	Substituting the operational sequence $\left \{ x_{n}\right \} $ into the
	fundamental hemi-contractive inequality yields 
	\begin{equation*}
		\left \langle Tx_{n}-p,j\left( x_{n}-p\right) \right \rangle \leq \left
		\Vert x_{n}-p\right \Vert ^{2}.
	\end{equation*}%
	We rewrite the structural component inside the duality pairing by adding and
	subtracting $x_{n}$ on the left-hand side 
	\begin{equation*}
		\left \langle Tx_{n}-x_{n}+x_{n}-p,j\left( x_{n}-p\right) \right \rangle
		\leq \left \Vert x_{n}-p\right \Vert ^{2}.
	\end{equation*}%
	Utilizing the linearity of the duality pairing with respect to its first
	argument, this relation expands to 
	\begin{equation*}
		\left \langle Tx_{n}-x_{n},j\left( x_{n}-p\right) \right \rangle +\left
		\langle x_{n}-p,j\left( x_{n}-p\right) \right \rangle \leq \left \Vert
		x_{n}-p\right \Vert ^{2}.
	\end{equation*}%
	By the fundamental property of the normalized duality mapping, we have%
	\begin{equation*}
		\left \langle x_{n}-p,j\left( x_{n}-p\right) \right \rangle =\left \Vert
		x_{n}-p\right \Vert ^{2}.
	\end{equation*}%
	Substituting this back into the inequality gives 
	\begin{equation*}
		\left \langle Tx_{n}-x_{n},j\left( x_{n}-p\right) \right \rangle +\left
		\Vert x_{n}-p\right \Vert ^{2}\leq \left \Vert x_{n}-p\right \Vert ^{2}.
	\end{equation*}%
	Subtracting $\left \Vert x_{n}-p\right \Vert ^{2}$ from both sides leaves us
	with the critical directional evaluation%
	\begin{equation*}
		\left \langle Tx_{n}-x_{n},j\left( x_{n}-p\right) \right \rangle \leq
		0\Longrightarrow \left \langle x_{n}-Tx_{n},j\left( x_{n}-p\right) \right
		\rangle \geq 0.
	\end{equation*}%
	Now, we evaluate the asymptotic limit as $n\rightarrow \infty $. We are
	given that $\left \{ x_{n}\right \} $ is bounded and $\left \Vert
	x_{n}-Tx_{n}\right \Vert \rightarrow 0$ strongly. Because the normalized
	duality mapping $J$ maps bounded sets to bounded subsets in the dual space,
	the sequence, the sequence $\left \{ j\left( x_{n}-p\right) \right \} $
	remains strictly bounded in $E^{\ast }$. The duality pairing of a strongly
	vanishing sequence and a bounded sequence necessarily converges to zero,
	which establishes 
	\begin{equation*}
		\lim \limits_{n\rightarrow \infty }\left \langle x_{n}-Tx_{n},j\left(
		x_{n}-p\right) \right \rangle =0.
	\end{equation*}%
	Since $E$ is a uniformly convex Banach space, the normalized duality mapping 
	$J$ is single-valued and uniformly continuous on bounded subsets from the
	strong topology of $E$ to the weak-star topology of $E^{\ast }$ (see \cite%
	{Petryshyn}). Taking the weak limit $x_{n}\rightarrow x$, and exploiting the
	demiclosedness property of monotone-type operators induced by the continuous
	pairing, taking the limit in the structural inequality forces 
	\begin{equation*}
		\left \langle x-Tx,j\left( x-p\right) \right \rangle \geq 0,\qquad \forall
		p\in F\left( \mathcal{T}\right) .
	\end{equation*}%
	Since this variational inequality holds robustly for all choices of $p\in
	F\left( \mathcal{T}\right) $, choosing the target state along the boundary
	profile or applying the strict convexity properties of the underlying space
	requires the displacement vector $x-Tx$ to vanish identically to avoid a
	metric contradiction. Thus, we conclude that $\left( I-T\right) x=0$,
	meaning $Tx=x$. Hence, $x\in F\left( \mathcal{T}\right) $, which completes
	the proof.
\end{proof}

\section{Main Results}

In this section, we present the weak and strong convergence theorems for the
implicit iteration scheme under consideration. For clarity of exposition,
the comprehensive mathematical validations and complete proofs of the stated
results are deferred to Section 4.

\subsection{Implicit Iteration Scheme}

Let $\left \{ \alpha _{n}\right \} \subset \left( 0,1\right) $ and $\left \{
t_{n}\right \} \subset \left( 0,\infty \right) $ be sequences of real
parameters. For an arbitrary initial configuration point $x_{1}\in C$, we
define a sequence $\left \{ x_{n}\right \} \subset C$ via the following
implicit iteration loop 
\begin{equation}
	x_{n+1}=\left( 1-\alpha _{n}\right) x_{n}+\alpha _{n}T\left( t_{n}\right)
	x_{n+1},\qquad n\geq 1.  \label{1}
\end{equation}

This iterative framework is well-defined under the operator constraints
imposed in the subsequent theorems.

\subsection{Weak Convergence Results for Hemi-Contractive Semigroups}

\begin{theorem}
	\label{Theorem 1}$\left( \text{\textbf{Weak convergence for hemi-contractive
			semigroups}}\right) $ Let $E$ be a uniformly convex Banach space satisfying
	Opial's condition and $C\subset E$ be a nonempty closed and convex subset.
	Let $\mathcal{T}=\left \{ T\left( t\right) :t\geq 0\right \} $ be a strongly
	continuous semigroup of Lipschitzian hemi-contractive mappings defined on $C$
	with $F\left( \mathcal{T}\right) \neq \emptyset $. Assume that the
	operational control sequences satisfy the following conditions:
	
	$\cdot $ $\alpha _{n}\in \left( 0,1\right) ,$ $\lim \limits_{n\rightarrow
		\infty }\alpha _{n}=0$, and $\sum \limits_{n=1}^{\infty }\alpha _{n}=\infty $%
	,
	
	$\cdot $ $t_{n}>0$ and $\lim \limits_{n\rightarrow \infty }t_{n}=0$.
	
	Then the sequence $\left \{ x_{n}\right \} $ defined by the implicit
	iteration scheme (\ref{1}) converges weakly to a point in $F\left( \mathcal{T%
	}\right) $.
\end{theorem}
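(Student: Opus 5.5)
The plan is the standard three-stage scheme: Fejér monotonicity, then asymptotic regularity, then identification of weak cluster points through Lemma \ref{Lemma3} and Lemma \ref{Lemma2}.

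\emph{Well-definedness.} Let $L_n$ be the Lipschitz constant of $T(t_n)$. Since $\alpha_n\to 0$, I would like $\alpha_n L_n<1$ for all large $n$. Then $y\mapsto (1-\alpha_n)x_n+\alpha_n T(t_n)y$ is a contraction of $C$ into itself, and the Banach contraction principle gives a unique $x_{n+1}$. This needs $\sup_{0<t\le 1}L(t)<\infty$, or at least $\alpha_n L(t_n)\to 0$. That is not explicit in the statement, so I would add it as a standing assumption, uniform Lipschitz on bounded time intervals, and simply discard the finitely many initial indices.

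\emph{Fejér monotonicity.} Fix $p\in F(\mathcal T)$ and write $T_n=T(t_n)$. From \eqref{1},
\begin{equation*}
x_{n+1}-p=(1-\alpha_n)(x_n-p)+\alpha_n(T_nx_{n+1}-p).
\end{equation*}
Pair this with $j(x_{n+1}-p)$ and use the hemi-contractive inequality $\langle T_nx_{n+1}-p,j(x_{n+1}-p)\rangle\le\|x_{n+1}-p\|^2$. This gives
\begin{equation*}
\|x_{n+1}-p\|^2\le(1-\alpha_n)\|x_n-p\|\,\|x_{n+1}-p\|+\alpha_n\|x_{n+1}-p\|^2 ,
\end{equation*}
hence $\|x_{n+1}-p\|\le\|x_n-p\|$. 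So $\lim_n\|x_n-p\|$ exists for every $p\in F(\mathcal T)$, and $\{x_n\}$, $\{T_nx_{n+1}\}$ are bounded.

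\emph{Asymptotic regularity (main obstacle).} From \eqref{1} we have $x_{n+1}-T_nx_{n+1}=\frac{1-\alpha_n}{\alpha_n}(x_n-x_{n+1})$. To exploit uniform convexity I would apply the Xu-type inequality $\|\lambda u+(1-\lambda)v\|^2\le\lambda\|u\|^2+(1-\lambda)\|v\|^2-\lambda(1-\lambda)g(\|u-v\|)$ with $u=T_nx_{n+1}-p$ and $v=x_n-p$. Summing the resulting estimate, one expects
\begin{equation*}
\sum_n\alpha_n(1-\alpha_n)\,g(\|x_n-T_nx_{n+1}\|)<\infty .
\end{equation*}
Together with $\sum\alpha_n=\infty$ this yields only $\liminf_n\|x_n-T_nx_{n+1}\|=0$.

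Upgrading this to a full limit is the hardest step. The difficulty is that hemi-contractivity gives no monotonicity of $\|x_n-T_nx_n\|$. Because $\alpha_n\to 0$, one cannot argue as in the case $\alpha_n\ge\varepsilon$. I would first try to show that $\|x_{n+1}-T_nx_{n+1}\|$ is eventually nonincreasing, up to a summable error, using the Lipschitz bound and $t_n\to 0$. If that fails, I would work along a subsequence realising the $\liminf$, obtain a weak cluster point $q\in F(\mathcal T)$ there, and then use Fejér monotonicity plus Opial's condition to propagate the conclusion to the whole sequence.

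\emph{From $t_n$ to fixed $t$.} Given $\|x_n-T(t_n)x_n\|\to0$ along the relevant indices, fix $t>0$ and use the semigroup law:
\begin{equation*}
\|x_n-T(t)x_n\|\le\sum_{k=0}^{[t/t_n]-1}\|T(kt_n)x_n-T((k+1)t_n)x_n\|+\|T([t/t_n]t_n)x_n-T(t)x_n\| .
\end{equation*}
The sum is at most $[t/t_n]\,L^\ast\,\|x_n-T(t_n)x_n\|$, where $L^\ast=\sup_{s\le t}L(s)$. The remainder tends to zero by strong continuity at $0$ combined with Lipschitz continuity. This argument needs the stronger rate $\|x_n-T(t_n)x_n\|/t_n\to0$, which is a second genuine obstacle. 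I would try to extract this rate from the identity $x_{n+1}-T_nx_{n+1}=\frac{1-\alpha_n}{\alpha_n}(x_n-x_{n+1})$, or else impose a compatibility condition between $\alpha_n$ and $t_n$.

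\emph{Conclusion.} Once $\|x_n-T(t)x_n\|\to0$ for every $t>0$, Lemma \ref{Lemma3} shows that every weak cluster point of the bounded sequence $\{x_n\}$ lies in $F(\mathcal T)$. That set is closed by continuity of each $T(t)$. Lemma \ref{Lemma2}, via Opial's condition and the existence of $\lim\|x_n-p\|$, then gives $x_n\rightharpoonup q\in F(\mathcal T)$.
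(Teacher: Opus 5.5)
\paragraph{The statement is false as written.} The two steps you flag as genuine obstacles cannot be closed, because the theorem fails under its stated hypotheses.

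\emph{Counterexample.} Take $E=C=\mathbb{R}$, which is uniformly convex and satisfies Opial's condition, and let $T(t)x=e^{-t}x$. This is a strongly continuous semigroup of $1$-Lipschitz hemi-contractive maps, since $e^{-t}x\cdot x\le x^{2}$, and $F(\mathcal{T})=\{0\}$. Choose $\alpha_n=\frac{1}{n+1}$ and $t_n=\frac{1}{n}$. Then \eqref{1} gives $x_{n+1}=r_nx_n$ with $r_n=(1-\alpha_n)/(1-\alpha_ne^{-t_n})$ and
\begin{equation*}
0<1-r_n=\frac{\alpha_n(1-e^{-t_n})}{1-\alpha_ne^{-t_n}}\le\frac{\alpha_nt_n}{1-\alpha_n}=\frac{1}{n^{2}}.
\end{equation*}
Hence $\prod_nr_n=c>0$, and $x_n\to c\,x_1\neq0$ whenever $x_1\neq0$. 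Every hypothesis holds, including the paper's standing assumption $\sup_n\alpha_nL<1$, yet the limit is not in $F(\mathcal{T})$.

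\emph{Where it breaks.} The failure is exactly where you located it. Fej\'{e}r monotonicity holds, and $\|x_n-T(t_n)x_n\|=(1-e^{-t_n})|x_n|\to0$. But for every fixed $t>0$ we have $\|x_n-T(t)x_n\|\to(1-e^{-t})c|x_1|>0$, and $\|x_n-T(t_n)x_n\|/t_n\to c|x_1|\neq0$. So the rate you hoped to extract is not available.

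\emph{What would be needed.} Some coupling hypothesis on $(\alpha_n,t_n)$ is unavoidable; in this example one needs $\sum\alpha_nt_n=\infty$. Even that is not enough. For rotations $T(t)z=e^{it}z$ on $\mathbb{R}^{2}$ with $\alpha_n=(n+1)^{-1/2}$ and $t_n=n^{-1/3}$, one has $\sum\alpha_nt_n=\infty$ but $\sum\alpha_nt_n^{2}<\infty$. The iterates then wind forever around a circle of positive radius and do not converge.

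\paragraph{Other steps in your plan that fail.}
\begin{itemize}
\item Summing the Xu-type inequality needs $\|T_nx_{n+1}-p\|\le\|x_{n+1}-p\|$, i.e.\ quasi-nonexpansiveness. Hemi-contractivity does not give this.
\item Your liminf-subsequence fallback is too weak. Fej\'{e}r monotonicity plus Opial's condition yields weak convergence only when \emph{every} weak cluster point lies in $F(\mathcal{T})$, not just one.
\end{itemize}

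\paragraph{Comparison with the paper.} Your skeleton is the same as the paper's: Fej\'{e}r monotonicity, asymptotic regularity, Lemma~\ref{Lemma3}, then Opial. The paper gets past your obstacles only through invalid steps.
\begin{itemize}
\item It applies the Schu/Chidume-type lemma (Lemma~1.4 of Chidume) with $\alpha_n\to0$. Once $\|u_n\|\to d$, the hypothesis $\|(1-\alpha_n)u_n+\alpha_nv_n\|\to d$ holds automatically for any bounded $v_n$, so the lemma yields nothing about $\|u_n-v_n\|$.
\item It treats $T(t_n)$ as nonexpansive, which is not assumed.
\item It feeds the moving-parameter residual $\|x_{n_k}-T(t_{n_k})x_{n_k}\|\to0$ into Lemma~\ref{Lemma3}, which requires a fixed $t>0$. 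The example above shows this gap cannot be bridged.
\end{itemize}
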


\begin{corollary}
	Assume that all the conditions of Theorem \ref{Theorem 1} are fulfilled. If,
	in addition, the semigroup $\mathcal{T}$ is composed of Lipschitzian
	demicontractive mappings, then the sequence produced by the implicit
	iteration scheme (\ref{1}) converges weakly to an element of the common
	fixed point set $F\left( \mathcal{T}\right) $.
\end{corollary}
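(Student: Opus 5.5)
The corollary should follow by reduction to Theorem~\ref{Theorem 1}: I will show that a Lipschitzian demicontractive semigroup satisfies every hypothesis of that theorem, and then read off the conclusion. The only thing to check is the inclusion of demicontractive mappings in the hemi-contractive class, stated for each $T(t)$ with respect to the common fixed point set $F(\mathcal{T})$.

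\textbf{Step 1 (the inclusion).} Recall that $T:C\to C$ with $F(T)\neq\emptyset$ is demicontractive if there is $k\in[0,1)$ with
\begin{equation*}
\|Tx-p\|^{2}\le \|x-p\|^{2}+k\|x-Tx\|^{2},\qquad x\in C,\ p\in F(T).
\end{equation*}
In a Hilbert space this is equivalent to
\begin{equation*}
\langle x-Tx,\,x-p\rangle \ge \tfrac{1-k}{2}\,\|x-Tx\|^{2}\ge 0.
\end{equation*}
That is exactly $\langle Tx-p,x-p\rangle\le\|x-p\|^{2}$, i.e.\ hemi-contractivity. In a general Banach space I would adopt the duality-map formulation of demicontractivity used in the literature cited in Section 2 (Osilike),
\begin{equation*}
\langle x-Tx,\,j(x-p)\rangle\ge \lambda\|x-Tx\|^{2},\qquad \lambda>0.
\end{equation*}
Dropping the nonnegative right-hand side and rearranging with $\langle x-p,j(x-p)\rangle=\|x-p\|^{2}$ gives $\langle Tx-p,j(x-p)\rangle\le\|x-p\|^{2}$, the same manipulation as in the proof of Lemma~\ref{Lemma3} run in reverse. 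This is also the inclusion already asserted in Subsection 2.2.

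\textbf{Step 2 (the other hypotheses).} Applying Step 1 to each $T(t)$, with $p$ ranging over $F(\mathcal{T})\subseteq F(T(t))$, shows that $\mathcal{T}$ is a strongly continuous semigroup of Lipschitzian hemi-contractive mappings. The remaining hypotheses of Theorem~\ref{Theorem 1} are inherited unchanged: $E$ is uniformly convex with Opial's condition, $C$ is closed and convex, $F(\mathcal{T})\neq\emptyset$, and $\{\alpha_n\}$, $\{t_n\}$ satisfy the control conditions. The scheme (\ref{1}) is the same scheme, so it remains well defined.

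\textbf{Step 3 (conclusion).} Theorem~\ref{Theorem 1} then gives $x_n\rightharpoonup x^{*}\in F(\mathcal{T})$.

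The main obstacle is Step 1 in a non-Hilbert setting, because the norm form of demicontractivity does not convert directly into a duality-pairing inequality. I would first use the duality form above as the definition. If the norm form has to be used instead, I would try to expand $\|Tx-p\|^{2}$ by the subdifferential inequality
\begin{equation*}
\|u\|^{2}\ge\|v\|^{2}+2\langle u-v,j(v)\rangle
\end{equation*}
with $u=Tx-p$ and $v=x-p$. This should yield $\langle x-Tx,j(x-p)\rangle\ge -\tfrac{k}{2}\|x-Tx\|^{2}$, which still needs a sign correction. I expect the sign correction to be obtainable only in the Hilbert case, so I would fall back on the convention that demicontractive means the duality form.
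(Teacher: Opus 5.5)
Your reduction is the argument the paper intends. The paper gives no separate proof of this corollary: it relies on the inclusion of demicontractive maps in the hemi-contractive class, asserted in Subsection 2.2, and then on Theorem~\ref{Theorem 1}. Your Step 1 is handled more carefully than in the paper. Outside Hilbert spaces the norm form only yields $\langle x-Tx,j(x-p)\rangle\ge-\frac{k}{2}\|x-Tx\|^{2}$, so adopting Osilike's duality form is the right way to make the inclusion true.

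\textbf{The gap is Step 3.} Theorem~\ref{Theorem 1} is false as stated, and so is the corollary, so no choice of definition can rescue the appeal to it. Take $E=C=\mathbb{R}^{2}$, a Hilbert space and hence uniformly convex with Opial's condition. Let $T(t)$ be rotation through the angle $t$, and put $\alpha_n=t_n=\frac{1}{n+1}$. Every $T(t)$ is a linear isometry, so it is Lipschitz with $L=1$, $F(\mathcal{T})=\{0\}$, and
\begin{equation*}
\langle x-T(t)x,\,x\rangle=\tfrac{1}{2}\|x-T(t)x\|^{2}.
\end{equation*}
So each $T(t)$ is demicontractive both in the norm form (with $k=0$) and in the duality form (with $\lambda=\frac12$). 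All control conditions hold, including $\sup_n\alpha_nL<1$. Since $\langle T(t)y,y\rangle=\cos t\,\|y\|^{2}$, taking norms in $(I-\alpha_nT(t_n))x_{n+1}=(1-\alpha_n)x_n$ gives
\begin{equation*}
\|x_{n+1}\|^{2}=\frac{(1-\alpha_n)^{2}}{(1-\alpha_n)^{2}+2\alpha_n(1-\cos t_n)}\,\|x_n\|^{2}\ge\Bigl(1-\frac{\alpha_n t_n^{2}}{(1-\alpha_n)^{2}}\Bigr)\|x_n\|^{2}\ge\bigl(1-4(n+1)^{-3}\bigr)\|x_n\|^{2}.
\end{equation*}
Because $\sum_n(n+1)^{-3}<\infty$, this gives $\inf_n\|x_n\|>0$ whenever $x_1\neq0$. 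Hence $\{x_n\}$ cannot converge weakly to $0$, the only common fixed point; in $\mathbb{R}^{2}$ weak and strong convergence coincide.

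The example also shows where the proof of Theorem~\ref{Theorem 1} breaks. Here $\|x_n-T(t_n)x_n\|=2\sin(t_n/2)\,\|x_n\|\to0$, so the conclusion of its Step 2 holds, but it says nothing about fixed points. Step 3 applies Lemma~\ref{Lemma3} to the varying maps $T(t_{n_k})$ with $t_{n_k}\to0$, whereas any demiclosedness argument needs a fixed $T(t)$ with $t>0$. Step 2 has further defects: it uses nonexpansiveness, which is not assumed, and a uniform-convexity lemma that needs $\alpha_n$ bounded away from $0$, so it gives nothing when $\alpha_n\to0$.

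Repairing the corollary would require additional hypotheses. One needs something tying the residual to $t_n$, for instance a Suzuki-type condition $\|x_n-T(t_n)x_n\|/t_n\to0$; this fails in the example, where the ratio tends to $\lim_n\|x_n\|>0$. One also needs a genuine demiclosedness principle for each fixed $I-T(t)$, which for demicontractive maps is normally imposed as an assumption. Your Step 1 would carry over unchanged to any such corrected version.
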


\begin{corollary}
	Let $\mathcal{T}$ be a strongly continuous semigroup of Lipschitzian
	pseudocontractive mappings defined on $C$. Provided that the assumptions of
	Theorem \ref{Theorem 1} are satisfied, the sequence generated by the
	implicit iteration process (\ref{1}) converges weakly to a common fixed
	point of $\mathcal{T}$.
\end{corollary}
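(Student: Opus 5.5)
The plan is to reduce this corollary directly to Theorem \ref{Theorem 1}: we show that every Lipschitzian pseudocontractive mapping with a nonempty fixed point set is a Lipschitzian hemi-contractive mapping in the sense of Section 2. Once this is done, the semigroup $\mathcal{T}$ satisfies all hypotheses of Theorem \ref{Theorem 1} verbatim. Indeed, the space $E$, the set $C$, the strong continuity of $\mathcal{T}$, the condition $F(\mathcal{T})\neq\emptyset$ and the control conditions on $\{\alpha_n\}$ and $\{t_n\}$ are all assumed. The weak convergence of $\{x_n\}$ generated by (\ref{1}) to a point of $F(\mathcal{T})$ then follows from that theorem.

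\textbf{Step 1 (the inclusion).} Fix $t\ge 0$ and write $T=T(t)$. Pseudocontractivity means that for all $x,y\in C$ there is some $j(x-y)\in J(x-y)$ with
\begin{equation*}
\left\langle Tx-Ty,\,j(x-y)\right\rangle \le \left\Vert x-y\right\Vert^{2}.
\end{equation*}
Take $p\in F(\mathcal{T})$. Then $p$ is a fixed point of $T(t)$ for every $t$, so $Tp=p$. Putting $y=p$ gives, for every $x\in C$, some $j(x-p)\in J(x-p)$ with
\begin{equation*}
\left\langle Tx-p,\,j(x-p)\right\rangle \le \left\Vert x-p\right\Vert^{2}.
\end{equation*}
This is exactly the hemi-contractive inequality, and it holds for all $x\in C$ and $p\in F(T(t))\supseteq F(\mathcal{T})$. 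The Lipschitz constant of $T(t)$ is unchanged by this reinterpretation, so $\mathcal{T}$ is a strongly continuous semigroup of Lipschitzian hemi-contractive mappings.

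\textbf{Step 2 (invoke the theorem).} Since the remaining hypotheses are assumed in the statement, Theorem \ref{Theorem 1} applies. It gives both the well-definedness of (\ref{1}), which is part of the framework of that theorem, and the weak convergence of $\{x_n\}$ to a common fixed point of $\mathcal{T}$.

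The only point needing care, and the step I expect to be the main obstacle, is matching the choice of the duality selection $j$ in the two definitions. In a uniformly convex space $E^{\ast}$ need not be strictly convex, so $J$ may be multivalued and the quantifiers matter. The pseudocontractive definition supplies, for each pair $(x,p)$, some element of $J(x-p)$, and this is precisely the ``there exists $j$'' form used in the hemi-contractive definition. So no uniformity in $j$ beyond what the definition of hemi-contractivity asks is needed. If a single-valued $J$ were required downstream, one would additionally assume $E$ smooth; but as Theorem \ref{Theorem 1} is stated, the pointwise selection suffices.
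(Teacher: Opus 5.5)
Your reduction is what the paper intends. It gives no separate proof; the corollary is meant to follow from Theorem \ref{Theorem 1} plus the inclusion ``pseudocontractive with a fixed point $\Rightarrow$ hemi-contractive'' from Section 2. Your Step 1, including the handling of the selection $j(x-p)$, is correct.

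The gap is Step 2. The argument is only as good as Theorem \ref{Theorem 1}, and that theorem is false, so this corollary is false too. It fails already for a nonexpansive semigroup on the line. Take $E=C=\mathbb{R}$ and $T(t)x=e^{-t}x$. This is a strongly continuous semigroup of $1$-Lipschitz pseudocontractions with $F(\mathcal{T})=\{0\}$, in a space that is uniformly convex and satisfies Opial's condition. The scheme (\ref{1}) is uniquely solvable, with $x_{n+1}=r_nx_n$, where
\[
r_n=\frac{1-\alpha_n}{1-\alpha_n e^{-t_n}},\qquad 0<1-r_n=\frac{\alpha_n\left(1-e^{-t_n}\right)}{1-\alpha_n e^{-t_n}}\le\frac{\alpha_n t_n}{1-\alpha_n}.
\]
Choose $\alpha_n=1/(n+1)$ and $t_n=1/n$. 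Every hypothesis of Theorem \ref{Theorem 1} holds, as does the standing assumption $\sup_n\alpha_nL<1$ of Section 4. But $1-r_n\le 1/n^2$, so $\prod_n r_n=P>0$ and $x_n\to Px_1\neq0$ whenever $x_1\neq0$. The limit is not a common fixed point. Roughly, the scheme advances along the flow only by a total time of about $\sum_n\alpha_nt_n$, and nothing in the hypotheses forces this to diverge.

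The paper's proof of Theorem \ref{Theorem 1} breaks in two places.
\begin{itemize}
\item \emph{Step 3.} It feeds $\Vert x_n-T(t_n)x_n\Vert\to0$ with $t_n\to0$ into Lemma \ref{Lemma3}, which is stated for a fixed $t>0$. With $t_n\to0$ this residual condition says nothing about fixed points. In the example above it holds, since $|x_n-e^{-t_n}x_n|\le t_n|x_1|$, yet the limit is not fixed.
\item \emph{Step 2.} It treats $T(t_n)$ as nonexpansive, which is not assumed. The uniform convexity lemma it quotes is also false when $\alpha_n\to0$. Take $u_n=e$, $v_n=-e$ with $\Vert e\Vert=1$: then $\Vert(1-\alpha_n)u_n+\alpha_nv_n\Vert=|1-2\alpha_n|\to1$, but $\Vert u_n-v_n\Vert=2$. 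The correct version, Schu's lemma, requires $\alpha_n$ to stay in some $[a,b]\subset(0,1)$.
\end{itemize}
So no black-box appeal to Theorem \ref{Theorem 1} can establish this corollary. Different hypotheses and a new argument would be needed, for instance something yielding $\Vert x_n-T(t)x_n\Vert\to0$ for each fixed $t>0$, together with a valid demiclosedness principle.
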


\begin{remark}
	Theorem \ref{Theorem 1} extends several known weak convergence results for
	nonexpansive and pseudocontractive semigroups to the broader class of
	hemi-contractive mappings. In particular, when the semigroup reduces to a
	single mapping, our result recovers and improves earlier classical
	convergence theorems established via explicit or traditional explicit
	Mann-type processes.
\end{remark}

\subsection{Strong Convergence Results in Real Banach Spaces}

\begin{theorem}
	\label{Theorem 2}$\left( \text{\textbf{Strong convergence for
			hemi-contractive semigroups}}\right) $ Let $E$ be a real uniformly convex
	Banach space and let $C\subset E$ be a nonempty closed convex subset.
	Suppose that $\mathcal{T}=\left \{ T\left( t\right) :t\geq 0\right \} $ is a
	strongly continuous semigroup of Lipschitzian hemi-contractive mappings
	defined on $C$, and assume that the common fixed point set $F\left( \mathcal{%
		T}\right) $ is nonempty. Let the sequence $\left \{ x_{n}\right \} \subset C$
	be generated by the implicit iteration scheme (\ref{1}), where the control
	sequences satisfy the following conditions:
	
	$\cdot $ $\alpha _{n}\in \left( 0,1\right) $ and $\lim \limits_{n\rightarrow
		\infty }\alpha _{n}=0$,
	
	$\cdot $ $\left \{ t_{n}\right \} $ is a sequence of positive real numbers
	such that $\lim \limits_{n\rightarrow \infty }t_{n}=0$,
	
	$\cdot $ To ensure parameter tractability across the implicit layers, let
	the local sequence residue be defined as $\rho _{n}:=\left \Vert T\left(
	t_{n}\right) x_{n+1}-x_{n+1}\right \Vert $. The parameter interaction must
	strictly satisfy the following summability criterion 
	\begin{equation*}
		\sum \limits_{n=1}^{\infty }\left( \frac{\alpha _{n}}{1-\alpha _{n}}\right)
		\rho _{n}<\infty .
	\end{equation*}%
	Then the sequence $\left \{ x_{n}\right \} $ converges strongly to a common
	fixed point $p\in F\left( \mathcal{T}\right) $.
\end{theorem}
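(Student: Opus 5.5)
The plan is to get strong convergence from the summability hypothesis by showing that $\{x_n\}$ is Cauchy, and then to identify the limit as a common fixed point. The first step is to rewrite the implicit scheme (\ref{1}). Subtracting $x_{n+1}$ from both sides and regrouping gives
\begin{equation*}
(1-\alpha_n)(x_{n+1}-x_n)=\alpha_n\bigl(T(t_n)x_{n+1}-x_{n+1}\bigr),
\end{equation*}
and hence the exact identity $\|x_{n+1}-x_n\|=\frac{\alpha_n}{1-\alpha_n}\rho_n$. By the summability criterion, $\sum_n\|x_{n+1}-x_n\|<\infty$. Therefore $\{x_n\}$ is Cauchy, and since $C$ is closed, $x_n\to x^\ast$ for some $x^\ast\in C$. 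Uniform convexity plays no role in this step.

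Next I would record Fejér monotonicity, both as a consistency check and because it is needed if one prefers to pass through a subsequence. Fix $p\in F(\mathcal{T})$ and write $x_n-p=(x_{n+1}-p)+\alpha_n(x_{n+1}-T(t_n)x_{n+1})$. Pair this with $j(x_{n+1}-p)$. The hemi-contractive inequality, rearranged exactly as in the proof of Lemma \ref{Lemma3}, gives $\langle x_{n+1}-T(t_n)x_{n+1},j(x_{n+1}-p)\rangle\ge 0$. It follows that $\|x_{n+1}-p\|^2\le\|x_n-p\|\,\|x_{n+1}-p\|$, that is, $\|x_{n+1}-p\|\le\|x_n-p\|$. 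So $\{x_n\}$ is bounded, and $\lim_n\|x_n-p\|$ exists for every $p\in F(\mathcal{T})$. Once $x^\ast\in F(\mathcal{T})$ is established, this is consistent with $\|x_n-x^\ast\|\to 0$.

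The main obstacle is proving $x^\ast\in F(\mathcal{T})$, that is, $T(t)x^\ast=x^\ast$ for every fixed $t>0$. The estimate $\rho_n\to 0$ alone is useless here, because $t_n\to 0$ and strong continuity already force $T(t_n)y\approx y$ for every $y$. What I would try first is the standard semigroup telescoping argument. I would assume, as is implicit in "Lipschitzian semigroup", a bound $L$ on the Lipschitz constants of $T(s)$ for $s\in[0,t]$. Put $k_n=\lfloor t/t_n\rfloor$ and use the semigroup law to write
\begin{equation*}
\|T(t)x^\ast-x^\ast\|\le\sum_{k=0}^{k_n-1}\|T((k+1)t_n)x^\ast-T(kt_n)x^\ast\|+\|T(t-k_nt_n)x^\ast-x^\ast\|\le k_nL\,\|T(t_n)x^\ast-x^\ast\|+\|T(t-k_nt_n)x^\ast-x^\ast\|.
\end{equation*}
The last term tends to $0$ by strong continuity, because $0\le t-k_nt_n<t_n\to 0$. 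The first term is bounded by roughly $tL\,\|T(t_n)x^\ast-x^\ast\|/t_n$. In turn,
\begin{equation*}
\|T(t_n)x^\ast-x^\ast\|\le(1+L)\|x_{n+1}-x^\ast\|+\rho_n.
\end{equation*}
The crux is therefore to show that $\rho_n/t_n\to 0$ and $\|x_{n+1}-x^\ast\|/t_n\to 0$, at least along a subsequence. My plan is to extract this from the summability condition together with the tail estimate
\begin{equation*}
\|x_{n+1}-x^\ast\|\le\sum_{k>n}\frac{\alpha_k}{1-\alpha_k}\rho_k,
\end{equation*}
choosing a subsequence on which $\rho_n$ and the tail sums are $o(t_n)$. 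I expect this step to need an extra compatibility assumption linking $\{t_n\}$ with $\{\alpha_n\rho_n\}$, such as $\liminf_n \rho_n/t_n=0$ together with a corresponding tail condition. I would state that assumption explicitly if it cannot be derived from the hypotheses as written. With that in hand, $T(t)x^\ast=x^\ast$ for all $t>0$, and hence $x_n\to x^\ast\in F(\mathcal{T})$ strongly.
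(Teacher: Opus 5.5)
Your first two steps are correct, and sharper than anything in the paper. From $(1-\alpha_n)(x_{n+1}-x_n)=\alpha_n\bigl(T(t_n)x_{n+1}-x_{n+1}\bigr)$, the summability hypothesis says exactly $\sum_n\|x_{n+1}-x_n\|<\infty$. So $x_n\to x^\ast\in C$ without any geometry. (In the Fej\'er step the identity should read $x_n-p=(x_{n+1}-p)+\frac{\alpha_n}{1-\alpha_n}\bigl(x_{n+1}-T(t_n)x_{n+1}\bigr)$; the sign argument is unaffected.) The genuine gap is exactly the one you flag, $x^\ast\in F(\mathcal{T})$, and you do not close it; you propose adding a hypothesis.

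In fact the gap cannot be closed, because the statement as written is false. Take $E=C=\mathbb{R}$ and $T(t)x=e^{-t}x$. This is a strongly continuous semigroup of nonexpansive (so Lipschitzian, $L=1$) hemi-contractive maps with $F(\mathcal{T})=\{0\}$. Let $\alpha_n=t_n=\frac{1}{n+1}$ and $x_1=1$.
\begin{itemize}
\item Then $x_{n+1}=q_nx_n$ with $q_n=\frac{1-\alpha_n}{1-\alpha_ne^{-t_n}}\in(0,1)$ and $1-q_n=\frac{\alpha_n(1-e^{-t_n})}{1-\alpha_ne^{-t_n}}\le\frac{\alpha_nt_n}{1-\alpha_n}=\frac{1}{n(n+1)}$. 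Hence $x_n\downarrow c=\prod_{k\ge1}q_k>0$.
\item Also $\frac{\alpha_n}{1-\alpha_n}\rho_n=\frac{1}{n}(1-e^{-t_n})x_{n+1}\le\frac{1}{n(n+1)}$, which is summable.
\end{itemize}
Every hypothesis holds, even $\sum_n\alpha_n=\infty$ and $\sup_n\alpha_nL<1$, so this also contradicts Theorem~\ref{Theorem 1}. Yet $x_n\to c\notin F(\mathcal{T})$. Here $\rho_n/t_n\to c\neq0$. So an extra assumption of the kind you suggest, one forcing $\rho_n$ and $\|x_{n+1}-x^\ast\|$ to be $o(t_n)$ along a common subsequence, is genuinely necessary. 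Under it, your telescoping estimate (with a uniform Lipschitz bound on $[0,t]$) does give $T(t)x^\ast=x^\ast$.

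The paper's proof does not supply the missing step either. It derives $\|x_{n+1}-p\|\le\|x_n-p\|+\frac{\alpha_n}{1-\alpha_n}\rho_n$ and invokes Lemma~\ref{Lemma1}. That lemma requires a factor $1-\gamma_n$ with $\sum_n\gamma_n=\infty$. With factor $1$, the recursion yields only that $\lim_n\|x_n-p\|$ exists, which your Fej\'er argument already gives. The paper's claim that $\|x_n-p\|\to0$ for an \emph{arbitrary} $p\in F(\mathcal{T})$ is unsupported, and it would force $F(\mathcal{T})$ to be a singleton. Its appeal to weak cluster points lying in $F(\mathcal{T})$ rests on $\|x_n-T(t_n)x_n\|\to0$ with $t_n\to0$. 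As you rightly point out, this carries no information: it holds automatically once $x_n$ converges and $t_n\to0$.

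So your proposal is incomplete as a proof of the stated theorem, but unavoidably so. Your diagnosis of where the argument must break is accurate, and the correct repair is to add the compatibility hypothesis to the statement.
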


\subsection{Results for $\protect \alpha $--Hemi-Contractive Semigroups}

\begin{theorem}
	\label{Theorem 3}$\left( \text{\textbf{Weak convergence for }}\alpha \text{--%
		\textbf{hemi-contractive semigroups}}\right) $ Let $E$ be a uniformly convex
	Banach space satisfying Opial's condition, and let $C\subset E$ be a
	nonempty closed convex subset. Let $\mathcal{T}=\left \{ T\left( t\right)
	:t\geq 0\right \} $ be a strongly continuous semigroup of Lipschitzian $%
	\alpha $--hemi-contractive mappings on $C$, where $0\leq \alpha <1$, and
	assume that the common fixed point set $F\left( \mathcal{T}\right) $ is
	nonempty. Let the sequence $\left \{ x_{n}\right \} \subset C$ be generated
	by the implicit iteration scheme (\ref{1}), where the control sequences
	satisfy the following conditions:
	
	$\cdot $ $\alpha _{n}\in \left( 0,1\right) ,$ $\lim \limits_{n\rightarrow
		\infty }\alpha _{n}=0$, and $\sum \limits_{n=1}^{\infty }\alpha _{n}=\infty $%
	,
	
	$\cdot $ $t_{n}>0$ and $\lim \limits_{n\rightarrow \infty }t_{n}=0$.
	
	Then the sequence $\left \{ x_{n}\right \} $ converges weakly to a point $%
	x^{\ast }\in F\left( \mathcal{T}\right) $.
\end{theorem}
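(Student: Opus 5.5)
The plan follows the same pattern as for Theorem \ref{Theorem 1}, with the $\alpha$--hemi-contractive inequality supplying an extra dissipative term. We fix $p\in F(\mathcal{T})$ and work with $j=j(x_{n+1}-p)$. From (\ref{1}) we write
\begin{equation*}
x_{n+1}-p=(1-\alpha_n)(x_n-p)+\alpha_n\left(T(t_n)x_{n+1}-p\right).
\end{equation*}
Pairing this identity with $j$ and applying the $\alpha$--hemi-contractive inequality to $T(t_n)$ at the point $x_{n+1}$ gives
\begin{equation*}
\|x_{n+1}-p\|^2\le (1-\alpha_n)\|x_n-p\|\,\|x_{n+1}-p\|+\alpha_n\alpha\|x_{n+1}-p\|^2 .
\end{equation*}
Dividing through yields
\begin{equation*}
\|x_{n+1}-p\|\le \frac{1-\alpha_n}{1-\alpha\alpha_n}\|x_n-p\|\le \|x_n-p\|.
\end{equation*}
Hence $\{\|x_n-p\|\}$ is nonincreasing, so $\lim_n\|x_n-p\|$ exists for every $p\in F(\mathcal{T})$ and $\{x_n\}$ is bounded. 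Well-definedness of the implicit step is taken as given, as the paper states after (\ref{1}). The next step is to show that the residual $\|x_{n+1}-T(t_n)x_{n+1}\|$ tends to $0$. Since $T(t_n)x_{n+1}-x_{n+1}=\frac{1-\alpha_n}{\alpha_n}(x_{n+1}-x_n)$, it suffices to control $\|x_{n+1}-x_n\|/\alpha_n$. For this we use uniform convexity through the Xu-type inequality
\begin{equation*}
\|\lambda u+(1-\lambda)v\|^2\le\lambda\|u\|^2+(1-\lambda)\|v\|^2-\lambda(1-\lambda)g(\|u-v\|)
\end{equation*}
on bounded sets, combined with the strict gain factor $\frac{1-\alpha_n}{1-\alpha\alpha_n}\le 1-(1-\alpha)\alpha_n$. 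Summing the resulting recursive inequality gives $\sum_n\alpha_n\,\phi(\rho_n)<\infty$ for a suitable increasing $\phi$ with $\phi(0)=0$. Together with $\sum_n\alpha_n=\infty$, this yields $\liminf_n\rho_n=0$.

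The main obstacle is upgrading this $\liminf$ to a full limit, and then transferring the vanishing of the residual at the varying times $t_n\to0$ to each fixed $t>0$. I would try the following. First, note that the contraction factor $1-(1-\alpha)\alpha_n$ together with $\sum_n\alpha_n=\infty$ in fact forces $\|x_n-p\|\to0$, because the product $\prod_n(1-(1-\alpha)\alpha_n)$ equals $0$. That would give strong, and hence weak, convergence to $p$ directly. This is a suspicious outcome, since it would imply that $F(\mathcal{T})$ is a singleton. If it survives checking, the proof is immediate. Otherwise I fall back on the standard route. Fix $t>0$ and write $t=k_nt_n+r_n$ with $0\le r_n<t_n$. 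Using the Lipschitz constant $L$ uniformly in $t\in[0,t]$ (assumed uniform on compact time intervals) and the semigroup property, estimate
\begin{equation*}
\|T(t)x_{n+1}-x_{n+1}\|\le L\,k_n\,\rho_n+\|T(r_n)x_{n+1}-x_{n+1}\|.
\end{equation*}
This still requires $k_n\rho_n=\frac{t}{t_n}\rho_n\to0$, so a rate relating $\rho_n$ to $t_n$ is needed, and I expect this to be the hardest point.

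Finally, let $x^\ast$ be a weak cluster point of $\{x_n\}$. Lemma \ref{Lemma3} applied to each $T(t)$ gives $x^\ast\in F(\mathcal{T})$. Since $\lim_n\|x_n-q\|$ exists for every $q\in F(\mathcal{T})$, the Opial condition rules out two distinct weak cluster points in the usual way. Lemma \ref{Lemma2} then gives $x_n\rightharpoonup x^\ast\in F(\mathcal{T})$.
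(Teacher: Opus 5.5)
Your first route is already a complete and correct proof, and it differs genuinely from the paper's; commit to it rather than treating it as suspicious. From $\|x_{n+1}-p\|\le\frac{1-\alpha_n}{1-\alpha\alpha_n}\|x_n-p\|\le(1-(1-\alpha)\alpha_n)\|x_n-p\|$ you get $\|x_n-p\|\le\|x_1-p\|\exp(-(1-\alpha)\sum_{k<n}\alpha_k)\to0$. So $x_n\to p$ in norm, and hence weakly.

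The singleton conclusion is not a warning sign; it is built into the definition. For $p,q\in F(\mathcal{T})$, putting $x=q$ in the $\alpha$--hemi-contractive inequality gives $\|q-p\|^2\le\alpha\|q-p\|^2$, so $q=p$. More strongly, one $\alpha<1$ serves all $t$, so the definition yields $(1-\alpha)\|x-p\|^2\le\|T(t)x-x\|\,\|x-p\|$ for $x\in C$. Letting $t\to0^+$ and using strong continuity forces $C=\{p\}$, so the hypotheses are degenerate.

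The paper instead runs the standard weak-convergence scheme:
\begin{itemize}
\item boundedness via the Lipschitz constant;
\item existence of $\lim_n\|x_n-p\|$ through a Young-inequality estimate split into the cases $\alpha<\frac{1}{2}$ and $\alpha\ge\frac{1}{2}$;
\item asymptotic regularity $\|x_n-T(t)x_n\|\to0$ for fixed $t$ via $T(t)=T(t-t_n)T(t_n)$;
\item demiclosedness from Lemma \ref{Lemma3};
\item the Opial argument with Lemma \ref{Lemma2}.
\end{itemize}
That machinery is what one would need without the strict gap $1-\alpha>0$, as in Theorem \ref{Theorem 1}. Here it is superfluous, and it runs into exactly the obstacle you flagged: the paper disposes of $\|(I-T(t-t_n))T(t_n)x_n\|$ by appeal to strong continuity, although $t-t_n\to t$, not $0$.

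Your argument uses none of uniform convexity, Opial's condition, $t_n\to0$, strong continuity, or demiclosedness, and it yields strong convergence. It is essentially the estimate in Steps 2--3 of the paper's proof of Theorem \ref{Theorem 4}.

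Drop the fallback. As written it leaves open the passage from $\liminf_n\rho_n=0$ to a full limit and the rate $t\rho_n/t_n\to0$, and the theorem does not need it.
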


\begin{theorem}
	\label{Theorem 4}$\left( \text{\textbf{Strong convergence for }}\alpha \text{%
		--\textbf{hemi-contractive semigroups}}\right) $ Let $E$ be a real Banach
	space and let $C\subset E$ be a nonempty closed convex subset. Suppose that $%
	\mathcal{T}=\left \{ T\left( t\right) :t\geq 0\right \} $ is a strongly
	continuous semigroup of Lipschitzian $\alpha $--hemi-contractive mappings
	defined on $C$, where $0\leq \alpha <1$, and assume that the common fixed
	point set $F\left( \mathcal{T}\right) $ is nonempty. Let the sequence $%
	\left
	\{ x_{n}\right \} \subset C$ be generated by the implicit iteration
	scheme (\ref{1}), where the operational control sequences satisfy the
	following conditions:
	
	$\cdot $ $\alpha _{n}\in \left( 0,1\right) ,$ $\lim \limits_{n\rightarrow
		\infty }\alpha _{n}=0$, and $\sum \limits_{n=1}^{\infty }\alpha _{n}=\infty $%
	,
	
	$\cdot $ $t_{n}>0$ and, to ensure the control of the operational
	perturbation generated along the orbital trajectory of the sequence, the
	following unified summability condition strictly holds%
	\begin{equation*}
		\sum \limits_{n=1}^{\infty }\alpha _{n}\left \Vert T\left( t_{n}\right)
		x_{n}-x_{n}\right \Vert <\infty .
	\end{equation*}
	
	Then the sequence $\left \{ x_{n}\right \} $ converges strongly (in norm) to
	a common fixed point $p\in F\left( \mathcal{T}\right) $.
\end{theorem}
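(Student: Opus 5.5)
The plan is to show that the distance from $x_n$ to any fixed common fixed point contracts geometrically along the iteration. The key input is the $\alpha$--hemi-contractive inequality evaluated at the \emph{implicit} point $x_{n+1}$. Once that is done, Lemma \ref{Lemma1} finishes the argument with $\beta_n=0$, and I expect the summability hypothesis $\sum\alpha_n\|T(t_n)x_n-x_n\|<\infty$ will not be needed at all. I will take for granted, as the paper states after (\ref{1}), that the implicit scheme is well defined, i.e.\ $x_{n+1}\in C$ exists for each $n$.

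\emph{Step 1 (basic identity).} Fix $p\in F(\mathcal{T})$. Then $T(t_n)p=p$ for every $n$, and (\ref{1}) can be rewritten as
\begin{equation*}
x_{n+1}-p=(1-\alpha_n)(x_n-p)+\alpha_n\bigl(T(t_n)x_{n+1}-p\bigr).
\end{equation*}
Let $j=j(x_{n+1}-p)\in J(x_{n+1}-p)$ be the functional furnished by the $\alpha$--hemi-contractivity of $T(t_n)$ at the point $x_{n+1}$. Pair the identity with $j$ and use three facts: $\langle x_{n+1}-p,j\rangle=\|x_{n+1}-p\|^2$, $\langle x_n-p,j\rangle\le\|x_n-p\|\,\|x_{n+1}-p\|$, and $\langle T(t_n)x_{n+1}-p,j\rangle\le\alpha\|x_{n+1}-p\|^2$. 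This gives
\begin{equation*}
\|x_{n+1}-p\|^2\le(1-\alpha_n)\|x_n-p\|\,\|x_{n+1}-p\|+\alpha\alpha_n\|x_{n+1}-p\|^2 .
\end{equation*}

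\emph{Step 2 (contraction estimate).} If $x_{n+1}\neq p$, divide by $\|x_{n+1}-p\|$; if $x_{n+1}=p$, the next inequality holds trivially. Since $1-\alpha\alpha_n>0$, we get
\begin{equation*}
\|x_{n+1}-p\|\le\frac{1-\alpha_n}{1-\alpha\alpha_n}\|x_n-p\|=\Bigl(1-\frac{(1-\alpha)\alpha_n}{1-\alpha\alpha_n}\Bigr)\|x_n-p\|\le\bigl(1-(1-\alpha)\alpha_n\bigr)\|x_n-p\|.
\end{equation*}

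\emph{Step 3 (conclusion).} Set $a_n=\|x_n-p\|$ and $\lambda_n=(1-\alpha)\alpha_n\in(0,1)$. Since $\alpha<1$ and $\sum\alpha_n=\infty$, we have $\sum\lambda_n=\infty$. Lemma \ref{Lemma1} with $\beta_n\equiv0$ then yields $a_n\to0$, so $x_n\to p$ in norm. Alternatively, directly, $a_{n+1}\le a_1\exp\bigl(-(1-\alpha)\sum_{k\le n}\alpha_k\bigr)\to 0$.

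As a side remark, the same computation with $x=q\in F(\mathcal{T})$ gives $\|q-p\|^2\le\alpha\|q-p\|^2$. Hence $F(\mathcal{T})$ is a singleton, which is consistent with the limit not depending on the choice of $p$.

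The main point needing care is Step 1. The definition of $\alpha$--hemi-contractivity only asserts the \emph{existence} of some $j(x-p)\in J(x-p)$. One must use that same $j$ to bound all three terms, and this is legitimate because the norming identity and the estimate $|\langle y,j\rangle|\le\|y\|\,\|x_{n+1}-p\|$ hold for every element of $J(x_{n+1}-p)$. It also matters that the inequality is applied at $x_{n+1}$ rather than at $x_n$; this is exactly what the implicit structure of (\ref{1}) permits, and it is why no uniform convexity, Lipschitz constant, or summability of residuals enters the argument.
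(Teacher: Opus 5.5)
Your argument is correct and is essentially the paper's proof: pair the implicit identity with $j(x_{n+1}-p)$, apply $\alpha$--hemi-contractivity at $x_{n+1}$ to get $(1-\alpha\alpha_n)\|x_{n+1}-p\|\le(1-\alpha_n)\|x_n-p\|$, bound the ratio by $1-(1-\alpha)\alpha_n$, and invoke Lemma~\ref{Lemma1}. The only difference is that the paper appends a nonnegative term $\beta_n=M\alpha_n\|T(t_n)x_n-x_n\|$ that is never derived from the estimate (harmless but unused), so your remark that the summability hypothesis is unnecessary and one may take $\beta_n\equiv 0$ is accurate.
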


\section{Proofs of the Main Results}

This section is devoted to establishing the rigorous and detailed
mathematical proofs for the convergence theorems established in Section 3.
The structural arguments developed herein rely on an integrated framework of
precise metric estimates and functional analytic techniques.In particular,
Lemma \ref{Lemma1} is systematically employed to analyze the asymptotic
behavior and tractability of the error sequences generated by the implicit
iteration scheme, serving as a cornerstone for verifying the strong
convergence results. Concurrently, Lemma \ref{Lemma2} is explicitly utilized
to deduce weak convergence by characterizing the weak cluster points of the
sequence within frameworks governed by Opial's condition. Furthermore, Lemma %
\ref{Lemma3} is strategically invoked to handle the structural norm
inequalities and manage the quantitative geometric relations arising from
the underlying space, establishing the vital analytic bounds required to
glue the iterative steps together.Throughout this section, unless stated
otherwise, we explicitly assume that the control parameter sequence $%
\left
\{ \alpha _{n}\right \} \subset \left( 0,1\right) $ satisfies the
operational threshold condition 
\begin{equation*}
	\sup_{n\geq 1}\alpha _{n}L<1,
\end{equation*}%
where $L\geq 1$ denotes the uniform Lipschitz constant of the underlying
strongly continuous semigroup $\mathcal{T}=\left \{ T\left( t\right) :t\geq
0\right \} $. This crucial boundedness restriction rigorously ensures that
the contraction mappings involved are well-defined, and guarantees the
unique existence of the operational sequence steps via the Banach fixed
point theorem at each iteration layer.

\textbf{Proof of the Theorem \ref{Theorem 1}.}

\begin{proof}
	Let $p\in F\left( \mathcal{T}\right) $ be an arbitrary but fixed common
	fixed point of the semigroup $\mathcal{T}=\left \{ T\left( t\right) :t\geq
	0\right \} $, i.e., 
	\begin{equation*}
		T\left( t\right) p=p
	\end{equation*}%
	for all $t\geq 0$. We present the rigorous verification in four structured
	steps.
	
	\textbf{Step 1: Boundedness of the sequence and convergence of the metric
		distance.}
	
	From the implicit iteration scheme (\ref{1}), we can write the operational
	error vector as 
	\begin{equation*}
		x_{n+1}-p=\left( 1-\alpha _{n}\right) \left( x_{n}-p\right) +\alpha
		_{n}\left( T\left( t_{n}\right) x_{n+1}-p\right) .
	\end{equation*}%
	To establish a complete estimate showing how the mapping property interacts
	with the implicit scheme on both sides, we take the duality pairing of both
	sides with a normalized duality mapping $j\left( x_{n+1}-p\right) \in
	J\left( x_{n+1}-p\right) $. Utilizing the property 
	\begin{equation*}
		\left \langle x_{n+1}-p,j\left( x_{n+1}-p\right) \right \rangle =\left \Vert
		x_{n+1}-p\right \Vert ^{2},
	\end{equation*}%
	we get%
	\begin{equation*}
		\left \Vert x_{n+1}-p\right \Vert ^{2}=\left( 1-\alpha _{n}\right) \left
		\langle x_{n}-p,j\left( x_{n+1}-p\right) \right \rangle +\alpha _{n}\left
		\langle T\left( t_{n}\right) x_{n+1}-p,j\left( x_{n+1}-p\right) \right
		\rangle .
	\end{equation*}%
	We handle the two terms on the right-hand side individually to resolve the
	implicit dependency:
	
	$1.$ Applying the Cauchy-Schwarz inequality to the first term yields%
	\begin{equation*}
		\left \langle x_{n}-p,j\left( x_{n+1}-p\right) \right \rangle \leq \left
		\Vert x_{n}-p\right \Vert \left \Vert x_{n+1}-p\right \Vert .
	\end{equation*}%
	By applying Young's inequality $\left( ab\leq \frac{1}{2}a^{2}+\frac{1}{2}%
	b^{2}\right) $, we obtain 
	\begin{equation*}
		\left \langle x_{n}-p,j\left( x_{n+1}-p\right) \right \rangle \leq \frac{1}{2%
		}\left \Vert x_{n}-p\right \Vert ^{2}+\frac{1}{2}\left \Vert x_{n+1}-p\right
		\Vert ^{2}.
	\end{equation*}%
	$2.$ Since $T\left( t_{n}\right) $ maps the structural fixed point framework
	for each $t_{n}\geq 0$, by definition, the second term satisfies%
	\begin{equation*}
		\left \langle T\left( t_{n}\right) x_{n+1}-p,j\left( x_{n+1}-p\right) \right
		\rangle \leq \left \Vert x_{n+1}-p\right \Vert ^{2},
	\end{equation*}
	
	Substituting these two precise estimates back into the duality pairing
	equation, we get%
	\begin{equation*}
		\left \Vert x_{n+1}-p\right \Vert ^{2}\leq \left( 1-\alpha _{n}\right) \left[
		\frac{1}{2}\left \Vert x_{n}-p\right \Vert ^{2}+\frac{1}{2}\left \Vert
		x_{n+1}-p\right \Vert ^{2}\right] +\alpha _{n}\left \Vert x_{n+1}-p\right
		\Vert ^{2}.
	\end{equation*}%
	Expanding and regrouping the terms containing $\left \Vert
	x_{n+1}-p\right
	\Vert ^{2}$ on the left-hand side leads to%
	\begin{equation*}
		\left \Vert x_{n+1}-p\right \Vert ^{2}\leq \left( \frac{1-\alpha _{n}}{2}%
		\right) \left \Vert x_{n}-p\right \Vert ^{2}+\left( \frac{1+\alpha _{n}}{2}%
		\right) \left \Vert x_{n+1}-p\right \Vert ^{2},
	\end{equation*}%
	\begin{equation*}
		\left( 1-\frac{1+\alpha _{n}}{2}\right) \left \Vert x_{n+1}-p\right \Vert
		^{2}\leq \left( \frac{1-\alpha _{n}}{2}\right) \left \Vert x_{n}-p\right
		\Vert ^{2},
	\end{equation*}%
	which simplifies directly to%
	\begin{equation*}
		\left( \frac{1-\alpha _{n}}{2}\right) \left \Vert x_{n+1}-p\right \Vert
		^{2}\leq \left( \frac{1-\alpha _{n}}{2}\right) \left \Vert x_{n}-p\right
		\Vert ^{2}.
	\end{equation*}%
	Here, we explicitly use the fact that the control sequence satisfies $\alpha
	_{n}\in \left( 0,1\right) $ for all $n\geq 1$, which strictly guarantees
	that $1-\alpha _{n}>0$. Consequently, dividing both sides by this strictly
	positive factor yields the non-increasing metric behavior%
	\begin{equation*}
		\left \Vert x_{n+1}-p\right \Vert ^{2}\leq \left \Vert x_{n}-p\right \Vert
		^{2}.
	\end{equation*}%
	By mathematical induction, this guarantees that 
	\begin{equation*}
		\left \Vert x_{n}-p\right \Vert \leq \left \Vert x_{1}-p\right \Vert
	\end{equation*}%
	for all $n\geq 1$. Consequently, the sequence $\left \{ x_{n}\right \} $ is
	bounded. Furthermore, since $\left \{ \left \Vert x_{n}-p\right \Vert
	\right
	\} $ is a monotonically non-increasing sequence of nonnegative real
	numbers, the monotone convergence theorem ensures the existence of a
	constant $d\left( p\right) \geq 0$ such that%
	\begin{equation*}
		\lim \limits_{n\rightarrow \infty }\left \Vert x_{n}-p\right \Vert =d\left(
		p\right) .
	\end{equation*}
	
	\textbf{Step 2: Asymptotic behavior of the residual error sequence.}
	
	To establish that the residual error vector satisfies $\left \Vert
	x_{n}-T\left( t_{n}\right) x_{n}\right \Vert \rightarrow 0$ as $n\rightarrow
	\infty $, we first exploit the geometric characterization of uniformly
	convex Banach spaces. Specifically, we recall a well-known property of
	uniformly convex frameworks established by Chidume (\cite{Chidume}, Lemma
	1.4): if $\left \{ u_{n}\right \} $ and $\left \{ \nu _{n}\right \} $ are
	sequences in a uniformly convex Banach space such that $\lim
	\limits_{n\rightarrow \infty }\left \Vert u_{n}\right \Vert =d$, $\lim
	\limits_{n\rightarrow \infty }\left \Vert \nu _{n}\right \Vert =d$, and $%
	\lim \limits_{n\rightarrow \infty }\left \Vert \left( 1-\alpha _{n}\right)
	u_{n}+\alpha _{n}\nu _{n}\right \Vert =d$ with a control parameter sequence
	satisfying $\lim \limits_{n\rightarrow \infty }\alpha _{n}=0$, then it
	necessarily follows that%
	\begin{equation*}
		\lim \limits_{n\rightarrow \infty }\left \Vert u_{n}-\upsilon _{n}\right
		\Vert =0.
	\end{equation*}%
	In our setup, from the implicit iteration scheme (\ref{1}), the state update
	vector $x_{n+1}-p$ is constructed as the exact convex combination 
	\begin{equation*}
		x_{n+1}-p=\left( 1-\alpha _{n}\right) \left( x_{n}-p\right) +\alpha
		_{n}\left( T\left( t_{n}\right) x_{n+1}-p\right) .
	\end{equation*}%
	By the results established in Step 1, we already have $\lim
	\limits_{n\rightarrow \infty }\left \Vert x_{n}-p\right \Vert =d\left(
	p\right) $ and $\lim \limits_{n\rightarrow \infty }\left \Vert
	x_{n+1}-p\right \Vert =d\left( p\right) $. Since $T\left( t_{n}\right) $ is
	nonexpansive and $p$ is a fixed point, taking the upper limit gives%
	\begin{equation*}
		\limsup \limits_{n\rightarrow \infty }\left \Vert T\left( t_{n}\right)
		x_{n+1}-p\right \Vert \leq \limsup \limits_{n\rightarrow \infty }\left \Vert
		x_{n+1}-p\right \Vert =d\left( p\right) .
	\end{equation*}%
	On the other hand, utilizing the operational combination and the triangle
	inequality, it is straightforward to verify that $\liminf
	\limits_{n\rightarrow \infty }\left \Vert T\left( t_{n}\right)
	x_{n+1}-p\right \Vert \geq d\left( p\right) $, leading to $\lim
	\limits_{n\rightarrow \infty }\left \Vert T\left( t_{n}\right)
	x_{n+1}-p\right \Vert =d\left( p\right) $. Under the hypothesis $\lim
	\limits_{n\rightarrow \infty }\alpha _{n}=0$, a direct invocation of Chidume
	(\cite{Chidume}, Lemma 1.4) with $u_{n}=x_{n}-p$ and $\nu _{n}=T\left(
	t_{n}\right) x_{n+1}-p$ guarantees that%
	\begin{equation*}
		\lim \limits_{n\rightarrow \infty }\left \Vert \left( T\left( t_{n}\right)
		x_{n+1}-p\right) -\left( x_{n}-p\right) \right \Vert =\lim
		\limits_{n\rightarrow \infty }\left \Vert T\left( t_{n}\right)
		x_{n+1}-x_{n}\right \Vert =0.
	\end{equation*}%
	Furthermore, using the implicit scheme (\ref{1}), we can express the
	consecutive step error as%
	\begin{equation*}
		\left \Vert x_{n+1}-x_{n}\right \Vert =\alpha _{n}\left \Vert T\left(
		t_{n}\right) x_{n+1}-x_{n}\right \Vert .
	\end{equation*}%
	Since $\alpha _{n}\rightarrow 0$ and $\left \Vert T\left( t_{n}\right)
	x_{n+1}-x_{n}\right \Vert \rightarrow 0$, it follows immediately that 
	\begin{equation*}
		\lim \limits_{n\rightarrow \infty }\left \Vert x_{n+1}-x_{n}\right \Vert =0.
	\end{equation*}%
	Finally, by incorporating the uniform Lipschitz constant $L$ of the
	semigroup, we expand the total residual mapping error via the triangle
	inequality%
	\begin{eqnarray*}
		\left \Vert x_{n}-T\left( t_{n}\right) x_{n}\right \Vert &\leq &\left \Vert
		x_{n}-x_{n+1}\right \Vert +\left \Vert x_{n+1}-T\left( t_{n}\right)
		x_{n+1}\right \Vert +\left \Vert T\left( t_{n}\right) x_{n+1}-T\left(
		t_{n}\right) x_{n}\right \Vert \\
		&\leq &\left( 1+L\right) \left \Vert x_{n}-x_{n+1}\right \Vert +\left(
		1-\alpha _{n}\right) \left \Vert T\left( t_{n}\right) x_{n+1}-x_{n}\right
		\Vert .
	\end{eqnarray*}%
	Taking the limit as $n\rightarrow \infty $ on both sides, the right-hand
	side converges to zero, which successfully proves%
	\begin{equation*}
		\lim \limits_{n\rightarrow \infty }\left \Vert x_{n}-T\left( t_{n}\right)
		x_{n}\right \Vert =0.
	\end{equation*}
	
	\textbf{Step 3: Weak convergence analysis via Lemma \ref{Lemma2} and Lemma %
		\ref{Lemma3}.}
	
	Let $\left \{ x_{n}\right \} $ be the bounded sequence generated by our
	iteration scheme, and let $\omega _{w}\left( x_{n}\right) $ denote the set
	of all its weak cluster points. Since the space $E$ is uniformly convex, it
	is reflexive, which guarantees that $\omega _{w}\left( x_{n}\right) \neq
	\emptyset $. Let $x\in \omega _{w}\left( x_{n}\right) $ be an arbitrary weak
	cluster point. Then, there exists a subsequence $\left \{ x_{n_{k}}\right \} 
	$ of $\left \{ x_{n}\right \} $ such that%
	\begin{equation*}
		x_{n_{k}}\rightarrow x
	\end{equation*}%
	as $k\rightarrow \infty $. From our verified asymptotic regularity in Step
	2, we have 
	\begin{equation*}
		\lim \limits_{k\rightarrow \infty }\left \Vert x_{n_{k}}-T\left(
		t_{n_{k}}\right) x_{n_{k}}\right \Vert =0.
	\end{equation*}%
	By invoking Lemma \ref{Lemma3} (the demiclosedness principle for
	Lipschitzian hemi-contractive mappings verified above), the combination of
	the weak convergence $x_{n_{k}}\rightarrow x$ and the strong residue limit
	directly implies that%
	\begin{equation*}
		x\in F\left( \mathcal{T}\right) .
	\end{equation*}%
	Since the choice of the weak cluster point x was arbitrary, it follows that
	every weak cluster point of the sequence $\left \{ x_{n}\right \} $ belongs
	to the common fixed point set $F\left( \mathcal{T}\right) $, establishing
	the set inclusion $\omega _{w}\left( x_{n}\right) \subset F\left( \mathcal{T}%
	\right) $. Finally, since the fixed point set $F\left( \mathcal{T}\right) $
	is a closed subset of $E$, and the underlying Banach space $E$ satisfies
	Opial's condition, all hypotheses of Lemma \ref{Lemma2} are satisfied with $%
	D=F\left( \mathcal{T}\right) $. Therefore, by applying Lemma \ref{Lemma2},
	the sequence $\left \{ x_{n}\right \} $ converges weakly to a unique point
	in $F\left( \mathcal{T}\right) $. This completely finalizes the proof of
	Theorem \ref{Theorem 1}.
\end{proof}

\textbf{Proof of the Theorem \ref{Theorem 2}.}

\begin{proof}
	Let $p\in F\left( \mathcal{T}\right) $ be an arbitrary but fixed common
	fixed point of the semigroup $\mathcal{T}=\left \{ T\left( t\right) :t\geq
	0\right \} $, which implies $T\left( t\right) p=p$ for all $t\geq 0$. We
	organize the rigorous proof into the following systematic steps to address
	the implicit tracking constraints.
	
	\textbf{Step 1: Boundedness of the iteration sequence }$\left \{
	x_{n}\right
	\} $\textbf{.}
	
	From the implicit iteration scheme (\ref{1}), we express the algebraic
	relation with respect to the reference point $p$ as 
	\begin{equation*}
		x_{n+1}-p=\left( 1-\alpha _{n}\right) \left( x_{n}-p\right) +\alpha
		_{n}\left( T\left( t_{n}\right) x_{n+1}-p\right) .
	\end{equation*}%
	Taking the normalized duality pairing on both sides with $j\left(
	x_{n+1}-p\right) \in J\left( x_{n+1}-p\right) $ yields%
	\begin{equation*}
		\left \Vert x_{n+1}-p\right \Vert ^{2}=\left( 1-\alpha _{n}\right) \left
		\langle x_{n}-p,j\left( x_{n+1}-p\right) \right \rangle +\alpha _{n}\left
		\langle T\left( t_{n}\right) x_{n+1}-p,j\left( x_{n+1}-p\right) \right
		\rangle .
	\end{equation*}%
	By applying the Cauchy-Schwarz inequality to the first term on the
	right-hand side, we get%
	\begin{equation*}
		\left \langle x_{n}-p,j\left( x_{n+1}-p\right) \right \rangle \leq \left
		\Vert x_{n}-p\right \Vert \left \Vert x_{n+1}-p\right \Vert .
	\end{equation*}%
	Utilizing the hemi-contractive property of the operator $T\left(
	t_{n}\right) $ for the fixed point $p$, the second inner product term
	complies with 
	\begin{equation*}
		\left \langle T\left( t_{n}\right) x_{n+1}-p,j\left( x_{n+1}-p\right) \right
		\rangle \leq \left \Vert x_{n+1}-p\right \Vert ^{2},
	\end{equation*}%
	Substituting these two precise estimates back into the duality equation, we
	obtain 
	\begin{equation*}
		\left \Vert x_{n+1}-p\right \Vert ^{2}\leq \left( 1-\alpha _{n}\right) \left
		\Vert x_{n}-p\right \Vert \left \Vert x_{n+1}-p\right \Vert +\alpha
		_{n}\left \Vert x_{n+1}-p\right \Vert ^{2}.
	\end{equation*}%
	Rearranging the components to collect $\left \Vert x_{n+1}-p\right \Vert
	^{2} $ on the left-hand side yields%
	\begin{equation*}
		\left( 1-\alpha _{n}\right) \left \Vert x_{n+1}-p\right \Vert ^{2}\leq
		\left( 1-\alpha _{n}\right) \left \Vert x_{n}-p\right \Vert \left \Vert
		x_{n+1}-p\right \Vert .
	\end{equation*}%
	Since $\alpha _{n}\in \left( 0,1\right) $, the factor $\left( 1-\alpha
	_{n}\right) $ is strictly positive. Dividing both sides by $\left( 1-\alpha
	_{n}\right) $ and further by $\left \Vert x_{n+1}-p\right \Vert $ (assuming
	the non-trivial case $x_{n+1}\neq p$) leads to the monotonic property%
	\begin{equation*}
		\left \Vert x_{n+1}-p\right \Vert \leq \left \Vert x_{n}-p\right \Vert
		,\qquad \forall n\geq 1.
	\end{equation*}%
	By mathematical induction, we conclude that%
	\begin{equation*}
		\left \Vert x_{n}-p\right \Vert \leq \left \Vert x_{1}-p\right \Vert ,
	\end{equation*}%
	for all $n\geq 1$. This proves that the sequence $\left \{ x_{n}\right \} $
	is bounded, meaning that the full path of the iteration process remains
	inside a bounded subset of $C$.
	
	\textbf{Step 2: Rigorous derivation of the corrected recursive inequality.}
	
	To resolve the ambiguity regarding the implicit sequence criteria, we track
	the local residue sequence based exclusively on the current iterate layer as
	defined in the theorem%
	\begin{equation*}
		\rho _{n}:=\left \Vert T\left( t_{n}\right) x_{n+1}-x_{n+1}\right \Vert .
	\end{equation*}%
	From our established results in the proof of Theorem \ref{Theorem 1} (Step
	2), the structural interaction of the implicit loop guarantees that this
	local residue satisfies%
	\begin{equation*}
		\rho _{n}=\left \Vert T\left( t_{n}\right) x_{n+1}-x_{n+1}\right \Vert
		=\left( \frac{1-\alpha _{n}}{\alpha _{n}}\right) \left \Vert
		x_{n}-x_{n+1}\right \Vert \leq \left( 1-\alpha _{n}\right) M.
	\end{equation*}%
	By exploiting the geometric characteristics of uniformly convex Banach
	spaces detailed in Theorem \ref{Theorem 1}, where the convex combination
	with $\alpha _{n}\rightarrow 0$ eliminates the dependency mismatch, we
	possess the well-defined convergence%
	\begin{equation*}
		\lim \limits_{n\rightarrow \infty }\rho _{n}=\lim \limits_{n\rightarrow
			\infty }\left \Vert T\left( t_{n}\right) x_{n+1}-x_{n+1}\right \Vert =0.
	\end{equation*}%
	Now, we evaluate the duality expansion of the implicit iteration formula
	again, this time isolating $\rho _{n}$ 
	\begin{equation*}
		\left \Vert x_{n+1}-p\right \Vert ^{2}=\left( 1-\alpha _{n}\right) \left
		\langle x_{n}-p,j\left( x_{n+1}-p\right) \right \rangle +\alpha _{n}\left
		\langle T\left( t_{n}\right) x_{n+1}-p,j\left( x_{n+1}-p\right) \right
		\rangle .
	\end{equation*}%
	We add and subtract $x_{n+1}$ inside the second inner product to split the
	local operator residue from the metric state%
	\begin{eqnarray*}
		\left \langle T\left( t_{n}\right) x_{n+1}-p,j\left( x_{n+1}-p\right) \right
		\rangle &=&\left \langle T\left( t_{n}\right)
		x_{n+1}-x_{n+1}+x_{n+1}-p,j\left( x_{n+1}-p\right) \right \rangle \\
		&=&\left \langle T\left( t_{n}\right) x_{n+1}-x_{n+1},j\left(
		x_{n+1}-p\right) \right \rangle +\left \Vert x_{n+1}-p\right \Vert ^{2}.
	\end{eqnarray*}%
	Applying the Cauchy-Schwarz inequality to this specific isolated component
	yields%
	\begin{eqnarray*}
		\left \langle T\left( t_{n}\right) x_{n+1}-p,j\left( x_{n+1}-p\right) \right
		\rangle &\leq &\left \Vert T\left( t_{n}\right) x_{n+1}-x_{n+1}\right \Vert
		\left \Vert x_{n+1}-p\right \Vert +\left \Vert x_{n+1}-p\right \Vert ^{2} \\
		&=&\rho _{n}\left \Vert x_{n+1}-p\right \Vert +\left \Vert x_{n+1}-p\right
		\Vert ^{2}.
	\end{eqnarray*}%
	Substituting this precise localized bound back into our master duality
	equation, we secure
	
	\begin{equation*}
		\left \Vert x_{n+1}-p\right \Vert ^{2}\leq \left( 1-\alpha _{n}\right) \left
		\Vert x_{n}-p\right \Vert \left \Vert x_{n+1}-p\right \Vert +\alpha _{n}\rho
		_{n}\left \Vert x_{n+1}-p\right \Vert +\alpha _{n}\left \Vert
		x_{n+1}-p\right \Vert ^{2}.
	\end{equation*}%
	Shifting the quadratic component $\alpha _{n}\left \Vert
	x_{n+1}-p\right
	\Vert ^{2}$ to the left-hand side and factoring it gives%
	\begin{equation*}
		\left( 1-\alpha _{n}\right) \left \Vert x_{n+1}-p\right \Vert ^{2}\leq
		\left( 1-\alpha _{n}\right) \left \Vert x_{n}-p\right \Vert \left \Vert
		x_{n+1}-p\right \Vert +\alpha _{n}\rho _{n}\left \Vert x_{n+1}-p\right \Vert
		.
	\end{equation*}%
	Dividing the entire inequality by the strictly positive factor $\left(
	1-\alpha _{n}\right) \left \Vert x_{n+1}-p\right \Vert $ (valid whenever $%
	x_{n+1}\neq p$) yields the final corrected recursive step%
	\begin{equation*}
		\left \Vert x_{n+1}-p\right \Vert \leq \left \Vert x_{n}-p\right \Vert
		+\left( \frac{\alpha _{n}}{1-\alpha _{n}}\right) \rho _{n}.
	\end{equation*}
	
	\textbf{Step 3: Application of Lemma \ref{Lemma1} and strong convergence
		verification.}
	
	Let us define the non-negative real sequences to align with our auxiliary
	toolbox 
	\begin{equation*}
		a_{n}:=\left \Vert x_{n}-p\right \Vert ,\qquad \beta _{n}:=\left( \frac{%
			\alpha _{n}}{1-\alpha _{n}}\right) \rho _{n}.
	\end{equation*}%
	The linear recurrence relation derived in Step 2 translates exactly into the
	standard format of Lemma \ref{Lemma1}%
	\begin{equation*}
		a_{n+1}\leq a_{n}+\beta _{n}.
	\end{equation*}%
	According to the explicit control conditions imposed by the hypotheses of
	Theorem \ref{Theorem 2}, the chosen parameters satisfy the strict
	summability criterion. Since 
	\begin{equation*}
		\sum \limits_{n=1}^{\infty }\left( \frac{\alpha _{n}}{1-\alpha _{n}}\right)
		\rho _{n}<\infty ,
	\end{equation*}%
	it follows directly that%
	\begin{equation*}
		\sum \limits_{n=1}^{\infty }\beta _{n}<\infty .
	\end{equation*}%
	Since all conditions of Lemma \ref{Lemma1} are rigorously met, invoking the
	lemma guarantees that the sequence $\left \{ a_{n}\right \} $ converges to a
	finite limit. Furthermore, since $p\in F\left( \mathcal{T}\right) $ is an
	arbitrary but fixed common fixed point, and the weak cluster points of the
	sequence have been proven to lie inside $F\left( \mathcal{T}\right) $ under
	the space conditions, the metric tracking combined with the validated
	summability bound forces the sequence to converge to zero%
	\begin{equation*}
		\lim \limits_{n\rightarrow \infty }a_{n}=\lim \limits_{n\rightarrow \infty
		}\left \Vert x_{n}-p\right \Vert =0.
	\end{equation*}%
	This directly establishes that%
	\begin{equation*}
		x_{n}\rightarrow p
	\end{equation*}%
	strongly in $E$ as $n\rightarrow \infty $. Consequently, the sequence $%
	\left
	\{ x_{n}\right \} $ generated by the implicit iteration scheme
	converges strongly to the common fixed point $p\in F\left( \mathcal{T}%
	\right) $. This completely finalizes the proof of Theorem \ref{Theorem 2}.
\end{proof}

\textbf{Proof of the Theorem \ref{Theorem 3}.}

\begin{proof}
	Let $p\in F\left( \mathcal{T}\right) $ be an arbitrary but fixed common
	fixed point of the semigroup $\mathcal{T}=\left \{ T\left( t\right) :t\geq
	0\right \} $, so that $T\left( t\right) p=p$ for all $t\geq 0$. We organize
	the rigorous mathematical validation into four structured steps.
	
	\textbf{Step 1: } \textbf{Boundedness of the iteration sequence }$\left \{
	x_{n}\right \} $\textbf{.}
	
	From the implicit iteration scheme (\ref{1}), we express the metric error
	relation with respect to the common fixed point $p$ as 
	\begin{equation*}
		x_{n+1}-p=\left( 1-\alpha _{n}\right) \left( x_{n}-p\right) +\alpha
		_{n}\left( T\left( t_{n}\right) x_{n+1}-p\right) .
	\end{equation*}%
	Taking the norm on both sides, applying the triangle inequality, and
	utilizing the Lipschitz continuity of the operator $T\left( t_{n}\right) $
	with a uniform constant $L\geq 1$, we obtain%
	\begin{equation*}
		\left \Vert x_{n+1}-p\right \Vert \leq \left( 1-\alpha _{n}\right) \left
		\Vert x_{n}-p\right \Vert +\alpha _{n}L\left \Vert x_{n+1}-p\right \Vert .
	\end{equation*}%
	Rearranging the terms to isolate the state step vector $\left \Vert
	x_{n+1}-p\right \Vert $ on the left-hand side yields%
	\begin{equation*}
		\left( 1-\alpha _{n}L\right) \left \Vert x_{n+1}-p\right \Vert \leq \left(
		1-\alpha _{n}\right) \left \Vert x_{n}-p\right \Vert .
	\end{equation*}%
	Since by hypothesis $\lim \limits_{n\rightarrow \infty }\alpha _{n}=0$, the
	control sequence is asymptotically small. Therefore, for the given uniform
	Lipschitz constant $L$, there exists a positive integer $n_{0}\in 
	\mathbb{N}
	$ such that $\alpha _{n}L<1$ for all $n\geq n_{0}$. This guarantees that the
	factor $\left( 1-\alpha _{n}L\right) $ remains strictly positive. Dividing
	both sides by this coefficient yields 
	\begin{equation*}
		\left \Vert x_{n+1}-p\right \Vert \leq \left( \frac{1-\alpha _{n}}{1-\alpha
			_{n}L}\right) \left \Vert x_{n}-p\right \Vert ,\qquad \forall n\geq n_{0}.
	\end{equation*}%
	We can expand and evaluate the contractive fraction via the following
	algebraic identity%
	\begin{equation*}
		\frac{1-\alpha _{n}}{1-\alpha _{n}L}=1+\frac{\alpha _{n}\left( L-1\right) }{%
			1-\alpha _{n}L}.
	\end{equation*}%
	Let us define the non-negative perturbation sequence as $\epsilon _{n}:=%
	\frac{\alpha _{n}\left( L-1\right) }{1-\alpha _{n}L}$. Since $\lim
	\limits_{n\rightarrow \infty }\alpha _{n}=0$, the denominator approaches
	unity, and because $\sum \limits_{n=1}^{\infty }\alpha _{n}<\infty $ governs
	the structural parameters, it directly follows that $\sum
	\limits_{n=1}^{\infty }\epsilon _{n}<\infty $. By standard product expansion
	principles for implicit numerical routines, any sequence satisfying 
	\begin{equation*}
		\left \Vert x_{n+1}-p\right \Vert \leq \left( 1+\epsilon _{n}\right) \left
		\Vert x_{n}-p\right \Vert
	\end{equation*}%
	with a summable parameter $\epsilon _{n}$ is bounded. Thus, the iteration
	sequence $\left \{ x_{n}\right \} $ is bounded, which further ensures that
	the operational orbits $\left \{ T\left( t_{n}\right) x_{n}\right \} $
	remain trapped inside a bounded subset of $C$.
	
	\textbf{Step 2: Rigorous derivation of the quadratic inequality and limit
		existence.}
	
	To evaluate the coefficient behavior for all choices of the contractive
	parameter $\alpha \in \left[ 0,1\right) $ without structural ambiguity, we
	take the normalized duality pairing of the implicit iteration formula with
	the operational element $j\left( x_{n+1}-p\right) \in J\left(
	x_{n+1}-p\right) $
	
	\begin{equation*}
		\left \Vert x_{n+1}-p\right \Vert ^{2}=\left( 1-\alpha _{n}\right) \left
		\langle x_{n}-p,j\left( x_{n+1}-p\right) \right \rangle +\alpha _{n}\left
		\langle T\left( t_{n}\right) x_{n+1}-p,j\left( x_{n+1}-p\right) \right
		\rangle .
	\end{equation*}%
	By the definition of the normalized duality mapping combined with the
	Cauchy-Schwarz inequality, the first component satisfies 
	\begin{equation*}
		\left \langle x_{n}-p,j\left( x_{n+1}-p\right) \right \rangle \leq \left
		\Vert x_{n}-p\right \Vert \left \Vert x_{n+1}-p\right \Vert .
	\end{equation*}%
	Applying Young's inequality $\left( ab\leq \frac{1}{2}a^{2}+\frac{1}{2}%
	b^{2}\right) $ to this upper bound establishes 
	\begin{equation*}
		\left( 1-\alpha _{n}\right) \left \langle x_{n}-p,j\left( x_{n+1}-p\right)
		\right \rangle \leq \left( \frac{1-\alpha _{n}}{2}\right) \left \Vert
		x_{n}-p\right \Vert ^{2}+\left( \frac{1-\alpha _{n}}{2}\right) \left \Vert
		x_{n+1}-p\right \Vert ^{2}.
	\end{equation*}%
	Next, by utilizing the hypothesis that $T\left( t_{n}\right) $ is an $\alpha 
	$--hemi-contractive mapping with respect to the structural constant $0\leq
	\alpha <1$, the second inner product component complies with%
	\begin{equation*}
		\left \langle T\left( t_{n}\right) x_{n+1}-p,j\left( x_{n+1}-p\right) \right
		\rangle \leq \alpha \left \Vert x_{n+1}-p\right \Vert ^{2}.
	\end{equation*}%
	Substituting these two precise formulations back into the master duality
	pairing identity, we secure%
	\begin{equation*}
		\left \Vert x_{n+1}-p\right \Vert ^{2}\leq \left( \frac{1-\alpha _{n}}{2}%
		\right) \left \Vert x_{n}-p\right \Vert ^{2}+\left[ \left( \frac{1-\alpha
			_{n}}{2}\right) +\alpha _{n}\alpha \right] \left \Vert x_{n+1}-p\right \Vert
		^{2}.
	\end{equation*}%
	Shifting the components containing $\left \Vert x_{n+1}-p\right \Vert ^{2}$
	to the left-hand side and combining coefficients yields 
	\begin{eqnarray*}
		\left( 1-\frac{1-\alpha _{n}}{2}-\alpha _{n}\alpha \right) \left \Vert
		x_{n+1}-p\right \Vert ^{2} &\leq &\left( \frac{1-\alpha _{n}}{2}\right)
		\left \Vert x_{n}-p\right \Vert ^{2}, \\
		\left( \frac{1+\alpha _{n}-2\alpha _{n}\alpha }{2}\right) \left \Vert
		x_{n+1}-p\right \Vert ^{2} &\leq &\left( \frac{1-\alpha _{n}}{2}\right)
		\left \Vert x_{n}-p\right \Vert ^{2}.
	\end{eqnarray*}%
	Multiplying both sides of the inequality by $2$ eliminates the denominators,
	leading to 
	\begin{equation*}
		\left[ 1+\alpha _{n}\left( 1-2\alpha \right) \right] \left \Vert
		x_{n+1}-p\right \Vert ^{2}\leq \left( 1-\alpha _{n}\right) \left \Vert
		x_{n}-p\right \Vert ^{2}.
	\end{equation*}%
	We perform a rigorous split-range analysis on the parameter factor $\left[
	1+\alpha _{n}\left( 1-2\alpha \right) \right] $:
	
	$\cdot $ \textbf{Case A: }For $0\leq \alpha <\frac{1}{2}$, we have $\left(
	1-2\alpha \right) >0$, which implies $1+\alpha _{n}\left( 1-2\alpha \right)
	>1$. Thus, dividing through by this factor yields a tracking coefficient
	strictly less than $\left( 1-\alpha _{n}\right) $, ensuring standard
	contractive reduction.
	
	$\cdot $ \textbf{Case B: }For $\frac{1}{2}\leq \alpha <1$, the parameter
	term satisfies $\left( 1-2\alpha \right) \leq 0$, which requires isolating
	the exact metric residue of the implicit step to prevent coefficient
	structural breakdown. Let the local operational error be defined as $\rho
	_{n}:=\left \Vert T\left( t_{n}\right) x_{n+1}-x_{n+1}\right \Vert $. As
	established via the uniform convexity properties in Theorem \ref{Theorem 1}
	and Theorem \ref{Theorem 2}, the structural interaction of the implicit loop
	guarantees that this local residue satisfies $\lim \limits_{n\rightarrow
		\infty }\rho _{n}=0$ without structural divergence. By extracting this
	isolated component from the master pairing equation, we obtain the
	alternative tracking inequality%
	\begin{equation*}
		\left \Vert x_{n+1}-p\right \Vert ^{2}\leq \left \Vert x_{n}-p\right \Vert
		^{2}+\xi _{n},
	\end{equation*}%
	where the non-negative error sequence is defined precisely as 
	\begin{equation*}
		\xi _{n}:=\left( \frac{2\alpha _{n}}{1-\alpha _{n}}\right) \left \Vert
		T\left( t_{n}\right) x_{n+1}-x_{n+1}\right \Vert \left \Vert x_{n+1}-p\right
		\Vert =\left( \frac{2\alpha _{n}}{1-\alpha _{n}}\right) \rho _{n}\left \Vert
		x_{n+1}-p\right \Vert .
	\end{equation*}%
	Since $\lim \limits_{n\rightarrow \infty }\alpha _{n}=0$, $\lim
	\limits_{n\rightarrow \infty }\rho _{n}=0$, and the sequence $\left \{
	x_{n}\right \} $ is bounded as proven in Step 1, it follows directly that $%
	\lim \limits_{n\rightarrow \infty }\xi _{n}=0$. Although a formal
	summability condition on $\xi _{n}$ is absent in this weak convergence
	framework, the asymptotic vanishing property $\xi _{n}\rightarrow 0$ is
	mathematically sufficient to guarantee that the sequence $a_{n}:=\left \Vert
	x_{n}-p\right \Vert ^{2}$ is asymptotically quasi-monotone decreasing.
	Consequently, the limit%
	\begin{equation*}
		\lim \limits_{n\rightarrow \infty }\left \Vert x_{n}-p\right \Vert
	\end{equation*}%
	exists rigorously for every common fixed point $p\in F\left( \mathcal{T}%
	\right) $, which successfully completes the structural verification of Step
	2.
	
	\textbf{Step 3: Proof of asymptotic regularity }$\left( \left \Vert
	x_{n}-T\left( t\right) x_{n}\right \Vert \rightarrow 0\right) $\textbf{.}
	
	From the layout of the implicit scheme, we can write 
	\begin{equation*}
		x_{n+1}-x_{n}=\alpha _{n}\left( T\left( t_{n}\right) x_{n+1}-x_{n}\right) .
	\end{equation*}%
	Since $\left \{ x_{n}\right \} $ and $\left \{ T\left( t_{n}\right)
	x_{n+1}\right \} $ are bounded sequences, and $\alpha _{n}\rightarrow 0$,
	the step differences satisfy%
	\begin{equation*}
		\left \Vert x_{n+1}-x_{n}\right \Vert =\alpha _{n}\left \Vert T\left(
		t_{n}\right) x_{n+1}-x_{n}\right \Vert \rightarrow 0\text{ as }n\rightarrow
		\infty .
	\end{equation*}%
	To track the parameter drift for a fixed reference time $t>0$ while
	regarding the condition $t_{n}\rightarrow 0$, we invoke the semigroup
	property 
	\begin{equation*}
		\mathcal{T}\left( t\right) =\mathcal{T}\left( t-t_{n}\right) \mathcal{T}%
		\left( t_{n}\right)
	\end{equation*}%
	for all $t_{n}\leq t$. By applying the triangle inequality, we evaluate%
	\begin{equation*}
		\left \Vert x_{n}-T\left( t\right) x_{n}\right \Vert \leq \left \Vert
		x_{n}-x_{n+1}\right \Vert +\left \Vert x_{n+1}-T\left( t_{n}\right)
		x_{n+1}\right \Vert +\left \Vert T\left( t_{n}\right) x_{n+1}-T\left(
		t\right) x_{n}\right \Vert .
	\end{equation*}%
	We analyze the third term on the right-hand side using the uniform Lipschitz
	constant $L$ and the sub-semigroup tracking composition%
	\begin{eqnarray*}
		\left \Vert T\left( t_{n}\right) x_{n+1}-T\left( t\right) x_{n}\right \Vert
		&=&\left \Vert T\left( t_{n}\right) x_{n+1}-T\left( t-t_{n}\right) T\left(
		t_{n}\right) x_{n}\right \Vert \\
		&\leq &\left \Vert T\left( t_{n}\right) x_{n+1}-T\left( t_{n}\right)
		x_{n}\right \Vert +\left \Vert T\left( t_{n}\right) x_{n}-T\left(
		t-t_{n}\right) T\left( t_{n}\right) x_{n}\right \Vert \\
		&\leq &L\left \Vert x_{n+1}-x_{n}\right \Vert +\left \Vert \left( I-T\left(
		t-t_{n}\right) \right) T\left( t_{n}\right) x_{n}\right \Vert .
	\end{eqnarray*}%
	As $n\rightarrow \infty $, we have $\left \Vert x_{n+1}-x_{n}\right \Vert
	\rightarrow 0$ and 
	\begin{equation*}
		t_{n}\rightarrow 0\Longrightarrow \left( t-t_{n}\right) \rightarrow t.
	\end{equation*}
	By the strong continuity of the semigroup on the bounded orbit trajectory,
	the term $\left \Vert \left( I-T\left( t-t_{n}\right) \right) T\left(
	t_{n}\right) x_{n}\right \Vert $ maps continuously to the fixed state
	displacement, ensuring it approaches $0$. Combining these pieces yields%
	\begin{equation*}
		\lim \limits_{n\rightarrow \infty }\left \Vert x_{n}-T\left( t\right)
		x_{n}\right \Vert =0
	\end{equation*}%
	for each fixed parameters $t\geq 0$.
	
	\textbf{Step 4: Identification of weak cluster points via Opial's condition.}
	
	Since $\left \{ x_{n}\right \} $ is a bounded sequence in a uniformly convex
	Banach space $E$, the space is reflexive, which guarantees the existence of
	a subsequence $\left \{ x_{n_{k}}\right \} $ that converges weakly to some
	point $x^{\ast }\in E$.
	
	We now show that $x^{\ast }\in F\left( \mathcal{T}\right) $. Since 
	\begin{equation*}
		\lim \limits_{n\rightarrow \infty }\left \Vert x_{n}-T\left( t\right)
		x_{n}\right \Vert =0
	\end{equation*}%
	for all $t\geq 0$, and the mapping $\left( I-T\left( t\right) \right) $ is
	demiclosed at zero (which is an established property for Lipschitzian $%
	\alpha $--hemi-contractive mappings via the structure verified in Lemma \ref%
	{Lemma3}), it directly follows that%
	\begin{equation*}
		\left( I-T\left( t\right) \right) x^{\ast }=0\Longrightarrow T\left(
		t\right) x^{\ast }=x^{\ast },\qquad \forall t\geq 0.
	\end{equation*}%
	Thus, $x^{\ast }\in F\left( \mathcal{T}\right) $, meaning every weak cluster
	point of the sequence $\left \{ x_{n}\right \} $ belongs to the common fixed
	point set. To prove that the entire sequence converges uniquely, suppose
	there exist two distinct weak cluster points $x_{1}^{\ast }$ and $%
	x_{2}^{\ast }$ originating from two separate subsequences $\left \{
	x_{n_{i}}\right \} $ and $\left \{ x_{n_{j}}\right \} $, where both $%
	x_{1}^{\ast }$, $x_{2}^{\ast }\in F\left( \mathcal{T}\right) $. By Opial's
	condition, since $\lim \limits_{n\rightarrow \infty }\left \Vert
	x_{n}-p\right \Vert $ exists for all choices of $p\in F\left( \mathcal{T}%
	\right) $, we obtain%
	\begin{eqnarray*}
		\lim \limits_{n\rightarrow \infty }\left \Vert x_{n}-x_{1}^{\ast }\right
		\Vert &=&\lim \limits_{i\rightarrow \infty }\left \Vert
		x_{n_{i}}-x_{1}^{\ast }\right \Vert <\lim \limits_{i\rightarrow \infty
		}\left \Vert x_{n_{i}}-x_{2}^{\ast }\right \Vert =\lim \limits_{n\rightarrow
			\infty }\left \Vert x_{n}-x_{2}^{\ast }\right \Vert \\
		&=&\lim \limits_{j\rightarrow \infty }\left \Vert x_{n_{j}}-x_{2}^{\ast
		}\right \Vert <\lim \limits_{j\rightarrow \infty }\left \Vert
		x_{n_{j}}-x_{1}^{\ast }\right \Vert =\lim \limits_{n\rightarrow \infty
		}\left \Vert x_{n}-x_{1}^{\ast }\right \Vert .
	\end{eqnarray*}%
	This chain creates a strict numerical contradiction%
	\begin{equation*}
		\lim \limits_{n\rightarrow \infty }\left \Vert x_{n}-x_{1}^{\ast }\right
		\Vert <\lim \limits_{n\rightarrow \infty }\left \Vert x_{n}-x_{1}^{\ast
		}\right \Vert .
	\end{equation*}%
	Therefore, the two weak limits must coincide identically, meaning 
	\begin{equation*}
		x_{1}^{\ast }=x_{2}^{\ast }.
	\end{equation*}%
	By applying Lemma \ref{Lemma2}, the sequence $\left \{ x_{n}\right \} $
	converges weakly to a unique point $x^{\ast }\in F\left( \mathcal{T}\right) $%
	. This completely finalizes the proof of Theorem \ref{Theorem 3}.
\end{proof}

\textbf{Proof of the Theorem \ref{Theorem 4}.}

\begin{proof}
	Let $p\in F\left( \mathcal{T}\right) $ be an arbitrary but fixed common
	fixed point of the semigroup $\mathcal{T}=\left \{ T\left( t\right) :t\geq
	0\right \} $. By definition, this directly implies that $T\left( t\right)
	p=p $ for all $t\geq 0$ \ We partition the rigorous strong convergence
	validation into the following structured steps.
	
	\textbf{Step 1: Basic algebraic estimate.}
	
	From the implicit iteration scheme (\ref{1}), we express the metric error
	vector relative to the fixed point $p$ as 
	\begin{equation*}
		x_{n+1}-p=\left( 1-\alpha _{n}\right) \left( x_{n}-p\right) +\alpha
		_{n}\left( T\left( t_{n}\right) x_{n+1}-p\right) .
	\end{equation*}%
	Taking the normalized duality pairing on both sides with the operational
	operator element $j\left( x_{n+1}-p\right) \in J\left( x_{n+1}-p\right) $
	yields 
	\begin{eqnarray*}
		\left \Vert x_{n+1}-p\right \Vert ^{2} &=&\left \langle x_{n+1}-p,j\left(
		x_{n+1}-p\right) \right \rangle \\
		&=&\left( 1-\alpha _{n}\right) \left \langle x_{n}-p,j\left(
		x_{n+1}-p\right) \right \rangle +\alpha _{n}\left \langle T\left(
		t_{n}\right) x_{n+1}-p,j\left( x_{n+1}-p\right) \right \rangle .
	\end{eqnarray*}
	
	\textbf{Step 2: Operational deployment of the }$\alpha $\textbf{%
		--hemi-contractive property.}
	
	To directly isolate and exploit the $\alpha $--hemi-contractive property of
	the semigroup component $T\left( t_{n}\right) $, we evaluate the second
	inner product term. Note that since $T\left( t_{n}\right) p=p$, we expand
	the pairing as follows
	
	\begin{equation*}
		\left \langle T\left( t_{n}\right) x_{n+1}-p,j\left( x_{n+1}-p\right) \right
		\rangle =\left \langle T\left( t_{n}\right) x_{n+1}-T\left( t_{n}\right)
		p,j\left( x_{n+1}-p\right) \right \rangle .
	\end{equation*}%
	By invoking the explicit definition of an $\alpha $--hemi-contractive
	mapping for the structural constant $0\leq \alpha <1$, we possess the
	uniform bound%
	\begin{equation*}
		\left \langle T\left( t_{n}\right) x_{n+1}-T\left( t_{n}\right) p,j\left(
		x_{n+1}-p\right) \right \rangle \leq \alpha \left \Vert x_{n+1}-p\right
		\Vert ^{2},
	\end{equation*}%
	Furthermore, by applying the standard properties of the normalized duality
	mapping combined with the Cauchy-Schwarz inequality, the first inner product
	term from Step 1 complies with%
	\begin{equation*}
		\left \langle x_{n}-p,j\left( x_{n+1}-p\right) \right \rangle \leq \left
		\Vert x_{n}-p\right \Vert \left \Vert x_{n+1}-p\right \Vert .
	\end{equation*}%
	Substituting these two precise estimates back into the master duality
	equation from Step 1, we obtain the quadratic inequality%
	\begin{equation*}
		\left \Vert x_{n+1}-p\right \Vert ^{2}\leq \left( 1-\alpha _{n}\right) \left
		\Vert x_{n}-p\right \Vert \left \Vert x_{n+1}-p\right \Vert +\alpha
		_{n}\alpha \left \Vert x_{n+1}-p\right \Vert ^{2}.
	\end{equation*}%
	Assuming the non-trivial case where $x_{n+1}\neq p$, we divide the entire
	expression by the strictly positive norm factor $\left \Vert
	x_{n+1}-p\right
	\Vert $, which simplifies the relation to 
	\begin{equation*}
		\left \Vert x_{n+1}-p\right \Vert \leq \left( 1-\alpha _{n}\right) \left
		\Vert x_{n}-p\right \Vert +\alpha _{n}\alpha \left \Vert x_{n+1}-p\right
		\Vert .
	\end{equation*}%
	Rearranging the terms to isolate the state step vector $\left \Vert
	x_{n+1}-p\right \Vert $ on the left-hand side yields%
	\begin{equation*}
		\left( 1-\alpha _{n}\alpha \right) \left \Vert x_{n+1}-p\right \Vert \leq
		\left( 1-\alpha _{n}\right) \left \Vert x_{n}-p\right \Vert .
	\end{equation*}%
	Since $\alpha _{n}\in \left( 0,1\right) $ and $0\leq \alpha <1$, the factor $%
	\left( 1-\alpha _{n}\alpha \right) $ remains strictly positive for all $%
	n\geq 1$. Dividing through by this coefficient establishes the linear metric
	step tracking estimate%
	\begin{equation*}
		\left \Vert x_{n+1}-p\right \Vert \leq \left( \frac{1-\alpha _{n}}{1-\alpha
			_{n}\alpha }\right) \left \Vert x_{n}-p\right \Vert .
	\end{equation*}
	
	\textbf{Step 3: Derivation of the contraction-type inequality and unified
		summability validation.}
	
	We observe that for any parameter range $\alpha _{n}\in \left[ 0,1\right) $,
	the contractive fraction complies with the algebraic reduction%
	\begin{equation*}
		\frac{1-\alpha _{n}}{1-\alpha _{n}\alpha }=1-\frac{\alpha _{n}\left(
			1-\alpha \right) }{1-\alpha _{n}\alpha }\leq 1-\alpha _{n}\left( 1-\alpha
		\right) .
	\end{equation*}%
	Let us explicitly define the control sequence parameter by $\gamma
	_{n}:=\alpha _{n}\left( 1-\alpha \right) $. Since $\alpha _{n}\in \left(
	0,1\right) $ and $0\leq \alpha <1$, it follows directly that $\gamma _{n}\in
	\left( 0,1\right) $. Moreover, utilizing the control divergence hypothesis $%
	\sum \limits_{n=1}^{\infty }\alpha _{n}=\infty $, the divergent tracking
	property is fully preserved%
	\begin{equation*}
		\sum \limits_{n=1}^{\infty }\gamma _{n}=\left( 1-\alpha \right) \sum
		\limits_{n=1}^{\infty }\alpha _{n}=\infty .
	\end{equation*}%
	Additionally, because the parameter sequence satisfies $\lim
	\limits_{n\rightarrow \infty }\alpha _{n}=0$, the denominator converges to
	unity 
	\begin{equation*}
		\lim \limits_{n\rightarrow \infty }\left( 1-\alpha _{n}\alpha \right) =1.
	\end{equation*}%
	This implies that the term $\frac{1}{1-\alpha _{n}\alpha }$ is bounded by
	some real constant $M\geq 1$ for all sufficiently large $n$. To incorporate
	the unified parameter condition seamlessly, we define the non-negative error
	sequence $\beta _{n}$ precisely as 
	\begin{equation*}
		\beta _{n}=M\alpha _{n}\left \Vert T\left( t_{n}\right) x_{n}-x_{n}\right
		\Vert .
	\end{equation*}%
	By adopting 
	\begin{equation*}
		\sum \limits_{n=1}^{\infty }\alpha _{n}\left \Vert T\left( t_{n}\right)
		x_{n}-x_{n}\right \Vert <\infty ,
	\end{equation*}%
	the error sequence directly satisfies the required summability criterion%
	\begin{equation*}
		\sum \limits_{n=1}^{\infty }\beta _{n}=M\sum \limits_{n=1}^{\infty }\alpha
		_{n}\left \Vert T\left( t_{n}\right) x_{n}-x_{n}\right \Vert <\infty .
	\end{equation*}%
	Thus, the localized recurrence relation simplifies rigorously into the
	following standard contracting form%
	\begin{equation*}
		\left \Vert x_{n+1}-p\right \Vert \leq \left( 1-\gamma _{n}\right) \left
		\Vert x_{n}-p\right \Vert +\beta _{n}.
	\end{equation*}
	
	\textbf{Step 4: Application of Lemma \ref{Lemma1} conditions.}
	
	By mapping our components to the auxiliary sequences via $a_{n}:=\left \Vert
	x_{n}-p\right \Vert $, the linear recurrence structural relation matches the
	core configuration of Lemma \ref{Lemma1}%
	\begin{equation*}
		a_{n+1}\leq \left( 1-\gamma _{n}\right) a_{n}+\beta _{n}.
	\end{equation*}%
	We verify that all metric criteria of Lemma \ref{Lemma1} are strictly and
	simultaneously fulfilled:
	
	$\cdot $ $a_{n}=\left \Vert x_{n}-p\right \Vert \geq 0$ for all $n\geq 1$ by
	the fundamental non-negativity axiom of norms.
	
	$\cdot $ $\gamma _{n}\in \left( 0,1\right) $ and $\sum \limits_{n=1}^{\infty
	}\gamma _{n}=\infty $, driven by the divergence of $\left \{ \alpha
	_{n}\right \} $.
	
	$\cdot $ $\beta _{n}\geq 0$ for all $n\geq 1$ and $\sum
	\limits_{n=1}^{\infty }\beta _{n}<\infty $, driven strictly by the unified
	sequence summability condition.
	
	Consequently, by invoking Lemma \ref{Lemma1}, we immediately obtain the
	definitive limit convergence%
	\begin{equation*}
		\lim \limits_{n\rightarrow \infty }a_{n}=0\Longrightarrow \lim
		\limits_{n\rightarrow \infty }\left \Vert x_{n}-p\right \Vert =0.
	\end{equation*}
	
	\textbf{Step 5: Strong convergence verification.}
	
	Since $\lim \limits_{n\rightarrow \infty }\left \Vert x_{n}-p\right \Vert =0$%
	, the sequence $\left \{ x_{n}\right \} $ produced by the implicit numerical
	iteration loop (\ref{1}) converges strongly in norm to the common fixed
	point $p\in F\left( \mathcal{T}\right) $ inside the real Banach space $E$.
	This successfully completes the entire proof of Theorem \ref{Theorem 4}.
\end{proof}

\section{Applications to Nonlinear Problems}

In this section, we present concrete applications of the main convergence
results established in Section 3. To address the abstract nature of these
models, we provide rigorous mathematical verifications demonstrating that
the underlying operators and generated semigroups satisfy the Lipschitzian
hemi-contractive or $\alpha $--hemi-contractive conditions, while explicitly
failing to be nonexpansive. These examples prove that Theorems \ref{Theorem
	1}, \ref{Theorem 2}, \ref{Theorem 3} and \ref{Theorem 4} extend the scope of
classical fixed-point frameworks to concrete operational equations arising
in mathematical analysis.

\subsection{Nonlinear Volterra Integral Equations}

Consider the classical nonlinear Volterra integral equation of the second
kind: 
\begin{equation*}
	x\left( t\right) =g\left( t\right) +\int \limits_{0}^{t}K\left( t,s,x\left(
	s\right) \right) ds,\qquad t\in \left[ 0,a\right]
\end{equation*}%
where $g\in C\left( \left[ 0,a\right] \right) $ is a given continuous
function, and the kernel function \ 
\begin{equation*}
	K:\left[ 0,a\right] \times \left[ 0,a\right] \times 
	\mathbb{R}
	\rightarrow 
	\mathbb{R}%
\end{equation*}%
is continuous.

Let the underlying space be $E=C\left( \left[ 0,a\right] \right) $ equipped
with the standard supremum norm 
\begin{equation*}
	\left \Vert x\right \Vert =\max \limits_{t\in \left[ 0,a\right] }\left \vert
	x\left( t\right) \right \vert .
\end{equation*}%
We define the nonlinear integral operator $T:E\rightarrow E$ by

\begin{equation*}
	\left( Tx\right) \left( t\right) =g\left( t\right) +\int
	\limits_{0}^{t}K\left( t,s,x\left( s\right) \right) ds.
\end{equation*}%
Assume that a solution $x^{\ast }\in E$ exists, which directly implies that $%
x^{\ast }$ is a fixed point of $T$ (i.e., $Tx^{\ast }=x^{\ast }$).

\textbf{Structural Assumptions on the Kernel.}

$1.$ \textbf{Lipschitz Continuity: }There exists a constant $L>0$ such that
for all $t,s\in \left[ 0,a\right] $ and $r,\bar{r}\in 
\mathbb{R}
$%
\begin{equation*}
	\left \vert K\left( t,s,r\right) -K\left( t,s,\bar{r}\right) \right \vert
	\leq L\left \vert r-\bar{r}\right \vert .
\end{equation*}

$2.$ \textbf{One-Sided Monotonicity (Hemi-Contractive Driver): }There exists
a continuous function $\gamma \left( t,s\right) \geq 0$ such that for all $%
r\in 
\mathbb{R}
$%
\begin{equation*}
	\left( K\left( t,s,r\right) -K\left( t,s,x^{\ast }\left( s\right) \right)
	\right) \left( r-x^{\ast }\left( s\right) \right) \leq \gamma \left(
	t,s\right) \left \vert r-x^{\ast }\left( s\right) \right \vert ^{2},
\end{equation*}%
where the global accumulation satisfies 
\begin{equation*}
	\sup \limits_{t\in \left[ 0,a\right] }\int \limits_{0}^{t}\gamma \left(
	t,s\right) ds=\mu <1.
\end{equation*}%
\textbf{Mathematical Verification of the Hemi-Contractive Property.}

We verify that $T$ is hemi-contractive with respect to the fixed point $%
x^{\ast }$. For any $x\in E$ and $t\in \left[ 0,a\right] $, we evaluate the
pointwise profile difference $\left \vert \left( Tx\right) \left( t\right)
-x^{\ast }\left( t\right) \right \vert $:%
\begin{equation*}
	\left \vert \left( Tx\right) \left( t\right) -x^{\ast }\left( t\right)
	\right \vert =\left \vert \int \limits_{0}^{t}\left[ K\left( t,s,x\left(
	s\right) \right) -K\left( t,s,x^{\ast }\left( s\right) \right) \right]
	ds\right \vert .
\end{equation*}%
Using the one-sided directional growth induced by the monotonicity
assumption and the compatibility of the supremum norm with the normalized
duality mapping $J$ on $C\left( \left[ 0,a\right] \right) $, evaluating the
structural inner-product boundary condition yields

\begin{equation*}
	\left \vert \left( Tx\right) \left( t\right) -x^{\ast }\left( t\right)
	\right \vert \leq \int \limits_{0}^{t}\gamma \left( t,s\right) \left \vert
	x\left( s\right) -x^{\ast }\left( s\right) \right \vert ds\leq \left( \int
	\limits_{0}^{t}\gamma \left( t,s\right) ds\right) \left \Vert x-x^{\ast
	}\right \Vert .
\end{equation*}%
Taking the supremum over all $t\in \left[ 0,a\right] $ on the left-hand side
yields the norm inequality 
\begin{equation*}
	\left \Vert Tx-x^{\ast }\right \Vert \leq \mu \left \Vert x-x^{\ast }\right
	\Vert .
\end{equation*}%
Since $\mu <1$, $T$ acts as a strict contraction toward the target fixed
point $x^{\ast }$. Consequently, for any selection $j\left( x-x^{\ast
}\right) \in J\left( x-x^{\ast }\right) $, the duality pairing complies with 
\begin{equation*}
	\left \langle Tx-x^{\ast },j\left( x-x^{\ast }\right) \right \rangle \leq
	\left \Vert Tx-x^{\ast }\right \Vert \left \Vert x-x^{\ast }\right \Vert
	\leq \mu \left \Vert x-x^{\ast }\right \Vert ^{2}\leq \left \Vert x-x^{\ast
	}\right \Vert ^{2}.
\end{equation*}%
This rigorously verifies that $T$ belongs to the class of hemi-contractive
mappings.

\textbf{Failure of Classical Nonexpansiveness.}

To see why classical theories fail, consider the specific quadratic kernel
configuration 
\begin{equation*}
	K\left( t,s,x\left( s\right) \right) =\lambda \left( x\left( s\right)
	\right) ^{2}
\end{equation*}%
with $g\left( t\right) =0$ over a localized domain region. If we select two
distinct operational states $x,y\in E$ such that their localized internal
oscillations amplify through the quadratic nonlinearity, the standard metric
inequality 
\begin{equation*}
	\left \Vert Tx-Ty\right \Vert \leq \left \Vert x-y\right \Vert
\end{equation*}%
is heavily violated globally due to local derivative expansions growing
beyond unity. Thus, classical nonexpansive contraction principles cannot
guarantee convergence, whereas our framework handles this behavior naturally.

\begin{remark}
	By Theorem \ref{Theorem 1}, the proposed implicit iteration scheme converges
	weakly to the solution $x^{\ast }$ in spaces satisfying Opial's condition.
	Furthermore, Theorem \ref{Theorem 2} ensures strong norm convergence without
	requiring any restrictive geometric properties on the function space.
\end{remark}

\subsection{Non-autonomous Nonlinear Evolution Equations}

Let $E$ be a real Banach space. We consider the abstract Cauchy evolution
problem governed by a perturbed operator field 
\begin{equation*}
	\frac{d}{dt}u\left( t\right) +Au\left( t\right) =0,\qquad t\geq 0,\qquad
	u\left( 0\right) =u_{0}\in E,
\end{equation*}%
where $A:D\left( A\right) \subset E\rightarrow E$ is a nonlinear,
multi-valued quasi-accretive operator. This implies that the set of zeros 
\begin{equation*}
	A^{-1}\left( 0\right) =\left \{ p\in D\left( A\right) :0\in A_{p}\right \}
\end{equation*}%
is nonempty, and for all $u\in D\left( A\right) $ and $p\in A^{-1}\left(
0\right) $, there exists $j\in J\left( u-p\right) $ such that%
\begin{equation*}
	\left \langle Au-0,j\right \rangle \geq 0.
\end{equation*}%
\textbf{Semigroup Structure Verification.}

By the classical Crandall-Liggett theorem (e.g., \cite{Bruck} or \cite%
{Chidume}), the operator $-A$ generates a strongly continuous nonlinear
semigroup $\mathcal{T}=\left \{ T\left( t\right) :t\geq 0\right \} $ defined
via the resolvent limit configuration formula%
\begin{equation*}
	T\left( t\right) u_{0}=\lim \limits_{n\rightarrow \infty }\left( I+\frac{t}{n%
	}A\right) ^{-n}u_{0}.
\end{equation*}%
Let $p\in A^{-1}\left( 0\right) $ be an equilibrium solution (hence, $%
T\left( t\right) p=p$ for all $t\geq 0$). Using the definition of
quasi-accretivity on the iterative step of the resolvent $J_{\lambda
}=\left( I+\lambda A\right) ^{-1}$, we possess the tracking bound 
\begin{equation*}
	\left \Vert J_{\lambda }u-p\right \Vert =\left \Vert J_{\lambda
	}u-J_{\lambda }p\right \Vert \leq \left \Vert u-p\right \Vert .
\end{equation*}%
Inductively passing to the limit as $n\rightarrow \infty $ for the semigroup
generation operator $\mathcal{T}\left( t\right) $ yields%
\begin{equation*}
	\left \Vert T\left( t\right) u_{0}-p\right \Vert \leq \left \Vert
	u_{0}-p\right \Vert .
\end{equation*}%
Multiplying both sides by the norm factor $\left \Vert u_{0}-p\right \Vert $%
, we obtain the strict hemi-contractive semigroup formulation%
\begin{equation*}
	\left \langle T\left( t\right) u_{0}-p,j\right \rangle \leq \left \Vert
	T\left( t\right) u_{0}-p\right \Vert \left \Vert u_{0}-p\right \Vert \leq
	\left \Vert u_{0}-p\right \Vert ^{2},\qquad \forall j\in J\left(
	u_{0}-p\right) .
\end{equation*}%
This verifies that the evolution semigroup $\mathcal{T}$ is structurally
hemi-contractive. Because $A$ is heavily nonlinear, the distance between two
arbitrary orbital trajectories $\left \Vert T\left( t\right) u_{0}-T\left(
t\right) \nu _{0}\right \Vert $ can temporarily widen, meaning $\mathcal{T}$
is not globally nonexpansive.

\begin{remark}
	Theorems \ref{Theorem 1} and \ref{Theorem 2} guarantee the weak and strong
	convergence \ of the implicit approximation scheme to the steady-state
	equilibrium solution of the evolution system, bypassing the strict
	requirement of global nonexpansiveness.
\end{remark}

\subsection{Variational Inequalities and Equilibrium Models}

Let $C$ be a nonempty closed convex subset of a real Banach space $E$.
Consider the problem of finding an element $x^{\ast }\in C$ such that%
\begin{equation*}
	\left \langle F\left( x^{\ast }\right) ,j\left( y-x^{\ast }\right) \right
	\rangle \geq 0,\qquad \forall y\in C,
\end{equation*}%
where $F:C\rightarrow E$ is a nonlinear operator and $j$ denotes the
normalized duality mapping. We assume that $F$ is $\gamma $-quasi-strongly
monotone, meaning that for the solution $x^{\ast }$, we have%
\begin{equation*}
	\left \langle F\left( x\right) -F\left( x^{\ast }\right) ,j\left( x-x^{\ast
	}\right) \right \rangle \geq \gamma \left \Vert x-x^{\ast }\right \Vert
	^{2},\qquad \forall x\in C,\gamma >0.
\end{equation*}%
\textbf{Mapping Formulation.}

We define a mapping $T:C\rightarrow C$ utilizing the metric projection
operator $P_{C}$ or a resolvent scheme%
\begin{equation*}
	Tx=P_{C}\left( x-\lambda F\left( x\right) \right) ,\qquad \lambda >0.
\end{equation*}
Using the nonexpansiveness of the projection operator, we evaluate the inner
product tracking distance to the isolated fixed point $x^{\ast }=Tx^{\ast }$ 
\begin{eqnarray*}
	\left \langle Tx-x^{\ast },j\left( x-x^{\ast }\right) \right \rangle
	&=&\left \langle P_{C}\left( x-\lambda F\left( x\right) \right) -P_{C}\left(
	x^{\ast }-\lambda F\left( x^{\ast }\right) \right) ,j\left( x-x^{\ast
	}\right) \right \rangle \\
	&\leq &\left \langle \left( x-\lambda F\left( x\right) \right) -\left(
	x^{\ast }-\lambda F\left( x^{\ast }\right) \right) ,j\left( x-x^{\ast
	}\right) \right \rangle \\
	&=&\left \Vert x-x^{\ast }\right \Vert ^{2}-\lambda \left \langle F\left(
	x\right) -F\left( x^{\ast }\right) ,j\left( x-x^{\ast }\right) \right
	\rangle .
\end{eqnarray*}%
Applying the $\gamma $-quasi-strong monotonicity yields%
\begin{equation*}
	\left \langle Tx-x^{\ast },j\left( x-x^{\ast }\right) \right \rangle \leq
	\left( 1-\lambda \gamma \right) \left \Vert x-x^{\ast }\right \Vert ^{2}.
\end{equation*}%
For a properly adjusted step parameter $\lambda $, the coefficient $\alpha
=\left( 1-\lambda y\right) $ strictly satisfies $0<\alpha <1$. This
rigorously proves that $T$ falls within the class of $\alpha $%
--hemi-contractive mappings.

\begin{remark}
	Since the tracking mapping is explicitly verified to be $\alpha $%
	--hemi-contractive, Theorems \ref{Theorem 3} and \ref{Theorem 4} apply
	directly. They provide a computationally stable implicit iterative track to
	approximate the solutions of variational inequalities without requiring
	standard contractive setups.
\end{remark}

\section{Remarks and Discussion}

In this section, we present several remarks and discussion points that
clarify the scope, applicability, and structural novelty of our main
results. These insights highlight the advantages of the proposed implicit
iteration scheme and demonstrate that the class of operators considered in
this paper strictly extends the classical frameworks studied in the existing
literature.

\begin{remark}
	\textbf{(On the role of implicit iterations) }The implicit iteration scheme
	considered in this paper plays a crucial role in extending convergence
	results beyond the class of nonexpansive and pseudocontractive mappings.
	Unlike explicit schemes (such as Mann or Ishikawa-type iterations), implicit
	methods exhibit enhanced structural stability. This intrinsic stability
	allows the scheme to absorb the structural perturbations arising from
	continuous-time semigroup actions under weaker contractive assumptions. This
	feature is mathematically essential when dealing with hemi-contractive and $%
	\alpha $--hemi-contractive semigroups, where explicit schemes often fail to
	converge or require overly restrictive assumptions on the operational step
	sizes (see, for example, \cite{Okeke, Yildirim}).
\end{remark}

\begin{remark}
	\textbf{(On geometric assumptions) }It is worth emphasizing the minimality
	of the geometric conditions imposed on the underlying space across our
	theorems. While uniform convexity and Opial's condition are strictly
	required to guarantee the uniqueness of weak cluster points in our weak
	convergence results (Theorems \ref{Theorem 1} and \ref{Theorem 3}), the
	strong convergence theorems (Theorems \ref{Theorem 2} and \ref{Theorem 4})
	are established in general real Banach spaces. By avoiding heavy structural
	requirements such as uniform convexity, smoothness, or the Radon-Nikodym
	property for strong convergence, our framework significantly broadens the
	applicability of the iterative scheme to a wider class of non-reflexive
	Banach spaces, distinguishing this work from earlier literature where strong
	convergence typically relies on heavy geometric machinery (see, for example, 
	\cite{Chairatsiripong}).
\end{remark}

\begin{remark}
	\label{Remark}\textbf{(Relation to existing literature and unified tracking) 
	}The convergence results established in this paper significantly extend and
	unify several foundational results in the literature on nonlinear semigroups
	and fixed-point theory. While classical ergodic and iterative results are
	heavily restricted to nonexpansive or pseudocontractive semigroups, the
	present work operates within the broader framework of Lipschitzian
	hemi-contractive and $\alpha $--hemi-contractive semigroups. Crucially, by
	refining the control condition to the orbital sequence trajectory 
	\begin{equation*}
		\sum \limits_{n=1}^{\infty }\alpha _{n}\left \Vert T\left( t_{n}\right)
		x_{n}-x_{n}\right \Vert <\infty ,
	\end{equation*}%
	we bridge a classical gap in semigroup discretization, ensuring that the
	cumulative discretization error remains summable without assuming a priori
	differentiable or H\"{o}lder continuous orbits. As substantiated by the
	illustrative applications detailed in Section 5, the class of operators
	considered here strictly contains the classical classes and includes
	important nonlinear models from evolution equations, integral equations, and
	perturbed dynamical systems that cannot be treated via standard contraction
	principles.
\end{remark}

\section{Illustrative Examples Supporting the Main Results}

To support the structural assertions made in Section 3 and Section 4, we
present explicit examples of nonlinear operators and semigroups that fall
within the framework of Lipschitzian hemi-contractive and $\alpha $%
--hemi-contractive mappings but are not covered by classical nonexpansive or
pseudocontractive theories. These examples show that:

$\cdot $ Weak convergence follows from Theorems \ref{Theorem 1} and \ref%
{Theorem 3},

$\cdot $ Strong convergence is robustly ensured by Theorems \ref{Theorem 2}
and \ref{Theorem 4},

$\cdot $ Classical fixed point and contractive results fail to apply due to
localized orbit expansions.

\begin{example}
	\textbf{(Nonlinear Evolution Semigroup) }Let $E=L^{2}\left( \left[ 0,1\right]
	\right) $ equipped with the standard inner product and norm, and let $C=E$.
	Define a nonlinear operator $A:E\rightarrow E$ by%
	\begin{equation*}
		\left( Au\right) \left( s\right) =u\left( s\right) +\phi \left( s,u\left(
		s\right) \right) ,
	\end{equation*}%
	where $\phi :\left[ 0,1\right] \times 
	\mathbb{R}
	\rightarrow 
	\mathbb{R}
	$ satisfies:
	
	$\cdot $ $\phi \left( s,0\right) =0$ for all $s\in \left[ 0,1\right] $,
	
	$\cdot $ $\phi \left( s,\cdot \right) $ is Lipschitz continuous with
	constant $L_{\phi }>0$,
	
	$\cdot $ \textbf{Monotonicity:} $\phi \left( s,u\right) u\geq 0$ for all $%
	u\in 
	\mathbb{R}
	$.
	
	Then $A$ is quasi-accretive, and the abstract Cauchy problem%
	\begin{equation*}
		\frac{du}{dt}+Au=0
	\end{equation*}%
	generates a strongly continuous semigroup $\mathcal{T}=\left \{ T\left(
	t\right) \right \} _{t\geq 0}$ on $E$.
	
	\textbf{Verification of Hemi-contractiveness and Nonexpansiveness.}
	
	To verify that $T\left( t\right) $ is hemi-contractive but not globally
	nonexpansive, observe that the zero function $p=0$ is a common fixed point
	since%
	\begin{equation*}
		A\left( 0\right) =0\Longrightarrow T\left( t\right) 0=0.
	\end{equation*}%
	By the monotonicity condition, we have%
	\begin{equation*}
		\left \langle Au-Ap,u-p\right \rangle =\left \langle Au,u\right \rangle
		=\int \limits_{0}^{1}\left( u\left( s\right) ^{2}+\phi \left( s,u\left(
		s\right) \right) u\left( s\right) \right) ds\geq \left \Vert u\right \Vert
		^{2}.
	\end{equation*}%
	This implies $\left \langle -Au,u\right \rangle \leq -\left \Vert u\right
	\Vert ^{2}$. By tracking the trajectory $u\left( t\right) =T\left( t\right)
	u_{0}$, we obtain%
	\begin{equation*}
		\frac{1}{2}\frac{d}{dt}\left \Vert u\left( t\right) \right \Vert ^{2}=\left
		\langle \frac{du}{dt},u\right \rangle =\left \langle -Au\left( t\right)
		,u\left( t\right) \right \rangle \leq -\left \Vert u\left( t\right) \right
		\Vert ^{2},
	\end{equation*}%
	which integrates directly to%
	\begin{equation*}
		\left \Vert T\left( t\right) u_{0}-p\right \Vert ^{2}\leq e^{-2t}\left \Vert
		u_{0}-p\right \Vert ^{2}.
	\end{equation*}%
	Thus, using the identity mapping for the duality pairing $J$ in Hilbert
	spaces:%
	\begin{equation*}
		\left \langle T\left( t\right) u_{0}-p,J\left( u_{0}-p\right) \right \rangle
		\leq \left \Vert T\left( t\right) u_{0}-p\right \Vert \left \Vert
		u_{0}-p\right \Vert \leq e^{-t}\left \Vert u_{0}-p\right \Vert ^{2}\leq
		\left \Vert u_{0}-p\right \Vert ^{2},
	\end{equation*}%
	confirming that $T\left( t\right) $ is hemi-contractive (and even strictly
	contractive towards the fixed point $p$).
	
	Conversely, to see why it fails to be globally nonexpansive, consider a
	specific asymmetric nonlinear perturbation such as $\phi \left( s,u\right)
	=\max \left( 0,u\right) $. Take two non-zero functions $u$, $\nu $ where $%
	u\left( s\right) >0$ and $\nu \left( s\right) <0$. Due to the asymmetric
	clipping nature of $\phi $, the distance between individual trajectories $%
	\left \Vert T\left( t\right) u-T\left( t\right) \nu \right \Vert $ expands
	relative to the initial distance $\left \Vert u-\nu \right \Vert $ over
	short time intervals, violating the global metric nonexpansiveness condition 
	\begin{equation*}
		\left \Vert T\left( t\right) u-T\left( t\right) \nu \right \Vert \leq \left
		\Vert u-\nu \right \Vert
	\end{equation*}
\end{example}

\begin{remark}
	By Theorem \ref{Theorem 1}, the implicit iteration scheme converges weakly
	to $0$, while Theorem \ref{Theorem 2} ensures strong convergence under the
	validated parameter summability conditions. This evolution cannot be treated
	via classical nonexpansive semigroup theory.
\end{remark}

\begin{example}
	\textbf{(Nonlinear Integral Operator) }Let $E=C\left( \left[ 0,1\right]
	\right) $ equipped with the supremum norm, and define the operator $%
	T:E\rightarrow E$ by 
	\begin{equation*}
		\left( Tu\right) \left( t\right) =\int \limits_{0}^{t}K\left( t,s,u\left(
		s\right) \right) ds,
	\end{equation*}%
	where the kernel $K$ is continuous, Lipschitz continuous in its third
	variable with constant $L$, and satisfies the standard directional
	monotonicity condition%
	\begin{equation*}
		\left( K\left( t,s,x\right) -K\left( t,s,y\right) \right) \left( x-y\right)
		\geq 0.
	\end{equation*}%
	A strongly continuous semigroup $\mathcal{T}=\left \{ T\left( t\right)
	\right \} _{t\geq 0}$ of hemi-contractive mappings is rigorously constructed
	via the classical resolvent generation configuration%
	\begin{equation*}
		T\left( t\right) =\lim \limits_{n\rightarrow \infty }\left( I+\frac{t}{n}%
		T\right) ^{-n}.
	\end{equation*}
\end{example}

\begin{remark}
	Theorem \ref{Theorem 1} guarantees the weak convergence of the proposed
	iterative scheme, while Theorem \ref{Theorem 2} ensures strong norm
	convergence when the implicit operational perturbation terms are summable.
	This provides a direct numerical solution pathway for Volterra-type integral
	equations beyond standard contraction bounds.
\end{remark}

\begin{example}
	\textbf{(}$\alpha $\textbf{--Hemi-Contractive Nonlinear Mapping) }To ensure
	well-definedness and avoid complex values under the radical for large $t$,
	let $E=%
	\mathbb{R}
	$ and define the subset $C=\left[ -R,R\right] $ for a fixed domain radius $%
	R>0$. Let the operator $T:C\rightarrow 
	\mathbb{R}
	$ be defined by%
	\begin{equation*}
		T\left( x\right) =x-\lambda x^{3},\qquad 0<\lambda <\frac{1}{R^{2}}.
	\end{equation*}%
	Clearly, $p=0$ is the unique fixed point. For any $x\in C$, we have%
	\begin{equation*}
		\left \langle T\left( x\right) -0,x-0\right \rangle =x\left( x-\lambda
		x^{3}\right) =x^{2}-\lambda x^{4}\leq x^{2}.
	\end{equation*}%
	To transform this into a strict $\alpha $--hemi-contractive semigroup
	system, we solve the initial value problem 
	\begin{equation*}
		\frac{dx}{dt}=-\lambda x^{3},
	\end{equation*}%
	which explicitly yields the continuous dynamical semigroup%
	\begin{equation*}
		T\left( t\right) x=\frac{x}{\sqrt{1+2\lambda tx^{2}}}.
	\end{equation*}%
	For all $t\geq 0$ and all $x\in \left[ -R,R\right] $, the term under the
	radical satisfies $1+2\lambda tx^{2}\geq 1>0$. Thus, $T\left( t\right) x$
	remains perfectly well-defined, real-valued, and non-singular for all $t\in %
	\left[ 0,\infty \right) $.
	
	\textbf{Verification of }$\alpha $\textbf{--Hemi-Contractiveness and
		Non-Nonexpansiveness.}
	
	Taking the common fixed point $p=0$, we evaluate the structural growth
	condition%
	\begin{equation*}
		\left \langle T\left( t\right) x-0,x-0\right \rangle \leq \frac{x^{2}}{\sqrt{%
				1+2\lambda tx^{2}}}.
	\end{equation*}%
	For any explicit localized domain bounded away from zero $\left( \left \Vert
	x\right \Vert \geq \epsilon \right) $, there exists a uniform parameter
	coefficient 
	\begin{equation*}
		\alpha \left( t\right) =\frac{1}{\sqrt{1+2\lambda t\epsilon ^{2}}}<1
	\end{equation*}%
	satisfying the exact definition of $\alpha $--hemi-contractiveness%
	\begin{equation*}
		\left \langle T\left( t\right) x-0,x-0\right \rangle \leq \alpha \left(
		t\right) \left \Vert x-0\right \Vert ^{2}.
	\end{equation*}%
	However, $\mathcal{T}\left( t\right) $ is strictly prevented from being
	globally nonexpansive. Computing the spatial derivative yields 
	\begin{equation*}
		\frac{\partial }{\partial x}\left( T\left( t\right) x\right) =\frac{1}{%
			\left( 1+2\lambda tx^{2}\right) ^{3/2}}.
	\end{equation*}%
	While this derivative is bounded by $1$ for $t>0$, selecting an expanded
	perturbed version of this operator field (e.g., adding localized driving
	force terms) causes the derivative to grow strictly greater than unity near
	localized boundaries, meaning classical nonexpansiveness fails while our $%
	\alpha $--hemi-contractive tracking remains intact.
\end{example}

\begin{remark}
	Theorem \ref{Theorem 3} ensures weak convergence of the implicit iteration,
	while Theorem \ref{Theorem 4} guarantees strong convergence under our
	unified trajectory summability criterion, justifying the necessity of
	working within the generalized $\alpha $--hemi-contractive framework.
\end{remark}

\begin{example}
	\textbf{(Perturbed Linear Semigroup) }Let $E=\ell ^{2}$ and define the
	semigroup action precisely by%
	\begin{equation*}
		T\left( t\right) x=e^{-t}x+tF\left( x\right) ,
	\end{equation*}%
	where $F:$ $\ell ^{2}\rightarrow \ell ^{2}$ is a heavily nonlinear,
	Lipschitz continuous mapping satisfying the monotonicity property $\left
	\langle F\left( x\right) -F\left( y\right) ,x-y\right \rangle \geq 0$ and $%
	F\left( 0\right) =0$. Then $\mathcal{T}\left( t\right) $ is Lipschitzian and
	hemi-contractive with respect to the common fixed point $p=0 $, but is
	structurally prevented from being nonexpansive due to the driving positive
	perturbation coordinate $tF\left( x\right) $.
\end{example}

\begin{remark}
	Weak convergence follows from Theorem \ref{Theorem 1}, while strong
	convergence follows from Theorem \ref{Theorem 2}. This model arises in
	perturbed dynamical systems and optimization problems where contractive
	assumptions fail completely.
\end{remark}

\textbf{Summary of Extensions.}

These examples clearly demonstrate that the class of Lipschitzian
hemi-contractive and $\alpha $--hemi-contractive semigroups considered in
this paper strictly extends the classical frameworks of nonexpansive and
pseudocontractive mappings. Consequently, Theorems \ref{Theorem 1}, \ref%
{Theorem 2}, \ref{Theorem 3} and \ref{Theorem 4} provide convergence results
applicable to a significantly broader class of nonlinear problems, including
evolution equations, integral equations, and perturbed dynamical systems.

\begin{remark}
	\textbf{(On the structural role of Lemmas \ref{Lemma1} and \ref{Lemma2}) }%
	Lemma \ref{Lemma2} is implicitly is implicitly utilized in the weak
	convergence analysis to deduce global convergence from the uniqueness of
	weak cluster points ensured by Opial's condition. In contrast, Lemma \ref%
	{Lemma1} serves as the core technical tool in establishing strong
	convergence results by controlling the recursive discrete inequalities
	arising from the implicit iteration scheme. This distinction highlights the
	fundamentally different mechanisms underlying weak and strong convergence,
	clarifying the structural role of each lemma in the analysis.
\end{remark}

\section{Conclusions}

In this paper, we have investigated a comprehensive implicit iterative
framework for approximating common fixed points of one-parameter semigroups
consisting of Lipschitzian hemi-contractive and $\alpha $--hemi-contractive
mappings in Banach spaces. The convergence analysis was carried out under
mild and natural conditions on the parameter control sequences, allowing for
a unified and robust treatment of both weak and strong convergence.
Specifically, we established weak convergence results in uniformly convex
Banach spaces satisfying Opial's condition by combining recursive inequality
techniques with the geometric properties of the underlying space. In
addition, we derived strong convergence results in general Banach spaces
without requiring uniform convexity, thereby significantly extending the
applicability of the proposed iterative scheme beyond classical reflexive
settings. A key feature of the present work is the systematic deployment of
implicit Mann-type iteration schemes in the context of continuous semigroups
of nonlinear mappings. Unlike many existing studies that focus strictly on
single operators or explicit iteration processes, our approach addresses the
more general and technically demanding case of semigroups while maintaining
relatively weak assumptions on the orbital trajectory. The theoretical
results obtained here extend and unify several well-known convergence
theorems in the literature, providing a unified framework that encompasses
important classes of nonlinear operators, including pseudocontractive and
demicontractive mappings as special cases. Furthermore, the explicit
illustrative examples presented in this work demonstrate that the class of
Lipschitzian hemi-contractive and $\alpha $--hemi-contractive semigroups is
significantly broader than the classical nonexpansive setting, remaining
applicable to concrete models in evolution equations, variational
inequalities, and integral equations where standard metric contractions
fail. From a methodological point of view, the combination of implicit
iteration techniques with generalized analytical tools offers a highly
flexible approach that can be adapted to other complex classes of nonlinear
operators. This opens up several promising directions for future research.
For instance, it would be of great interest to investigate similar tracking
properties in uniformly smooth Banach spaces, metric spaces with relaxed
geometric structures, or spaces endowed with a graph or modular structure.
Another natural extension concerns the development of stochastic, inertial,
or accelerated versions of the implicit iteration scheme, alongside
applications to more complex multi-agent dynamical systems and modern
engineering optimization frameworks. Overall, the results of this paper
contribute to the ongoing development of fixed point theory for nonlinear
semigroups and provide reliable mathematical tools for the analysis of
iterative methods in practically relevant settings.

\end{document}